\numberwithin{equation}{section}
\newtheorem{theorem}{Theorem}[section] 
\newtheorem{definition}[theorem]{Definition}
\newtheorem{proposition}[theorem]{Proposition}
\newtheorem{lemma}[theorem]{Lemma}
\newtheorem{remark}[theorem]{Remark}
\def\R{{\mathbb R}}
\renewcommand{\leq}{\leqslant}
\renewcommand{\geq}{\geqslant}
\numberwithin{equation}{section}
\newcommand{\pt}{\partial_t}
\newcommand{\pnu}{\partial_\nu}
\title{Local null controllability of a cubic Ginzburg-Landau equation with dynamic boundary conditions}
\author{
	Nicol\'as Carre\~{n}o\thanks{Departamento de Matem\'atica, Universidad T\'ecnica Federico Santa Mar\'{\i}a, Casilla 110-V, Valpara\'{\i}so, Chile e-mail: nicolas.carrenog@usm.cl.}
	\and 
	Alberto Mercado\thanks{Departamento de Matem\'atica, Universidad T\'ecnica Federico Santa Mar\'{\i}a, Casilla 110-V, Valpara\'{\i}so, Chile e-mail: alberto.mercado@usm.cl.
	}
	\and 
	Roberto Morales\thanks{Departamento de Matem\'atica, Universidad T\'ecnica Federico Santa Mar\'{\i}a, Casilla 110-V, Valpara\'{\i}so, Chile e-mail: roberto.moralesp@usm.cl.}
	}
\begin{document}
\maketitle

\begin{abstract}
This paper deals with    controllability properties of a cubic Ginzburg-Landau equation with dynamic boundary conditions. More precisely, we prove a local null controllability result by using a single control supported in a small subset of the domain. In order to achieve this result, we firstly linearize the system around the origin and we analyze it by the duality approach and an appropriate Carleman estimate. Then, by using an inverse function theorem, the local null controllability of the nonlinear system is proven. 
\end{abstract}

\noindent {\bf Keyword:} Controllability, Ginzburg-Landau equation, Dynamic boundary conditions.\\
{\bf MSC}(2020) 93B05, 35Q56, 93B07.  

\section{Introduction and main results} 
\subsection{Introduction} 
Let $\Omega\subset \mathbb{R}^d$ ($d\geq 2$) be a bounded domain with boundary $\Gamma:=\partial \Omega$ of class $C^2$.
Given the parameters $a,b,c>0$,  $\alpha, \gamma \in \mathbb{R}\setminus \{0\}$, we consider the following cubic Ginzburg-Landau equation with dynamic boundary conditions 
\begin{align}
\label{intro:eq:01}
\begin{cases}
\pt u -a(1+\alpha i)\Delta u + c(1+\gamma i)|u|^2u =\mathbbm{1}_\omega h,&\text{ in }\Omega\times (0,T),\\
\pt u_\Gamma + a(1+\alpha i)\pnu u -b(1 + \alpha i)\Delta_\Gamma u_\Gamma +c(1+\gamma i)|u_\Gamma|^2 u_\Gamma =0,&\text{ on }\Gamma\times (0,T),\\
u=u_\Gamma,&\text{ on }\Gamma\times (0,T),\\
(u(0),u_\Gamma(0))=(u_0,u_{\Gamma,0}),&\text{ in }\Omega\times \Gamma. 
\end{cases}
\end{align}

Here, $(u,u_\Gamma)$ is the state of the system, $(u_0,u_{\Gamma,0})$ the initial conditions and $h\in L^2(\omega\times (0,T);\mathbb{C})$ is a control acting on $\omega\subset \Omega$. We denote by $\Delta_\Gamma$ the Laplace-Beltrami operator on $\Gamma$ and by $\pnu y$ the normal derivative associated to the outward normal $\nu$ of $\Omega$.

We notice that \eqref{intro:eq:01} can be seen as a coupled system in the variables $(u,u_\Gamma)$, which is controlled by a single control $h$ in a (small) subset $\omega$ of $\Omega$. This means that the first equation of \eqref{intro:eq:01} is controlled directly by the action of the control, while the second equation is being controlled through the side condition $u=u_\Gamma$ on $\Gamma\times (0,T)$.  

The main objective of this work is to obtain the local null controllability of system \eqref{intro:eq:01} in $X$ (where $X$ is an appropriate Banach space), i.e., we will prove the existence of a number $\delta>0$ such that, for every initial state $(u_0,u_{\Gamma,0})\in X$ which fulfills
\begin{align*}
	\|(u_0,u_{\Gamma,0})\|_{X}\leq \delta,
\end{align*}
we can find a control $h\in L^2(\omega\times (0,T))$ such that the associated solution $(u,u_\Gamma)$ of \eqref{intro:eq:01} satisfies 
\begin{align*}
	y(\cdot, T)=0,\text{ in }\Omega,\quad y_\Gamma(\cdot,T)=0,\text{ on }\Gamma.
\end{align*}

\subsection{Previous results}
The cubic complex Ginzburg-Landau equation is one of the most studied nonlinear equations used to model  physical phenomena. 
This  equation  has been used to describe 
several phenomena ranging from nonlinear waves, second-order phase transitions, superconductivity, superfluidity and Bose-Einstein condensation to liquid crystals and strings in field theory. For a detailed description of  relevant  applications  in  different fields, see \cite{aranson2002world} and \cite{garcia2012complex}.

The existence and uniqueness of solutions of nonlinear Ginzburg-Landau equations with Dirichlet or periodic boundary conditions have been intensely investigated in several papers. For instance, we refer to \cite{Levermore1996}, \cite{Gao2004}, \cite{Gao2007}, \cite{Fu2006}, \cite{Chen1998}, \cite{Bu1990},  \cite{Bartuccelli1990} and the references therein. Concerning controllability properties of the Ginzburg-Landau equation with Dirichlet boundary conditions, only a few papers have been devoted to the study of the controllability  of such problems. In \cite{Aamo2005}, the stabilization of the linearized Ginzburg-Landau model with Dirichlet boundary conditions around an unstable equilibrium state is studied. Moreover, in \cite{Fu2006}, the author develop a Carleman inequality for an operator of the form 
\begin{align*}
	(a+ib)\pt +\text{div}(A\cdot \nabla),
\end{align*} 
with $A$ being a smooth, uniformly elliptic matrix, and a null controllability result for the linear PDE with a distributed control. In 2009, L. Rosier and B.-Y. Zhang in \cite{Rosier2009} proved a controllability result for the nonlinear case. In this case, the control acts on a part of the boundary and the proof is based on a suitable Carleman estimate for the linear adjoint system. Then, combining a fixed-point argument together with the theory of sectorial operators, the authors obtained a local controllability result for a wide class of nonlinearities. In particular, controllability results for the cubic and quintic Complex Ginzburg-Landau are provided.   

Recently, some results on inverse problems and controllability issues have been obtained for PDEs with dynamic boundary conditions, see for instance \cite{Maniar2017}, \cite{Khoutaibi2022}, \cite{BenHassi2021}, \cite{BenHassi2022inverse}, \cite{BenHassi2022},  and \cite{lecaros2022discrete} for the heat equation, \cite{Gal2017} for the wave equation, and \cite{mercado2022exact} for the Schr\"odinger operator. In these works, the authors has been used the duality equivalence to prove the associated observability inequality by using Carleman estimates. At this level, we point out that it is not evident at all that such systems can be controlled by the action of a single control due to the tangential derivative terms. In fact, in the case of the linear wave equation with mixed boundary conditions (oscilatory boundary conditions and Dirichlet boundary conditions) \cite{Gal2017}, the authors obtain exact controllability results where the control region is on the whole boundary (and therefore on the whole system). 
On the other hand, in  a similar setting, in \cite{mercado2022exact} the authors obtained the exact controllability of the linear Schr\"odinger equation with dynamic boundary conditions. In this case, the control acts only in a part of the boundary. To prove the associated observability inequality, the authors used a Global Carleman estimate for the Schr\"odinger operator, where the weight function is adapted to the geometric properties of the domain. 


Concerning the Ginzburg-Landau equation with dynamic boundary conditions, we mention \cite{Correa2018}, where well-posedness of linear/nonlinear of such models is obtained and long time behavior of solutions is characterized when Lipschitz nonlinearities are considered. However, to the best of the authors' knowledge, this is the first time that the null controllability for the cubic Ginzburg-Landau is studied. 

\subsection{General setting}
In this section, we set up the notation and terminology used in this paper. The set $\Gamma=\partial \Omega$ can be seen as an $(d-1)$-dimensional compact Riemannian submanifold equipped by the Riemmanian metric $g$  induced by the natural embedding $\Gamma \subset \mathbb{R}^d$. In addition, we shall denote by $dS$ the $(d-1)$-Lebesgue measure for $\Gamma$.

Since we are considering dynamic boundary conditions, we need to define some differential operators on $\Gamma$, 
which  
 can be defined in terms of the associated metric. However, for our purposes, it will be enough to use the most important properties of the underlaying operators and spaces. The details can be found, for instance, in \cite{Taylor}. For the sake of completeness, we recall some of those properties.

The tangential gradient $\nabla_\Gamma$ of $y_\Gamma$ at each point $x\in \Gamma$ can be seen as the projection of the standard Euclidean gradient $\nabla y$ onto the tangent space of $\Gamma$ at $x\in \Gamma$, where $y_\Gamma$ is the trace of $y$ on $\Gamma$, i.e., we have the following equation
\begin{align*}
\nabla_\Gamma y_\Gamma=\nabla y - \nu  \pnu y,
\end{align*}
where $y=y_\Gamma$ on $\Gamma$ and $\pnu y$ is the normal derivative associated to the outward normal $\nu$. In this way, the tangential divergence $\text{div}_\Gamma$ in $\Gamma$ is defined by 
\begin{align*}
\text{div}_\Gamma(F_\Gamma):H^1(\Gamma;\mathbb{R})\to \mathbb{R},\quad y_\Gamma \mapsto -\int_\Gamma F_\Gamma \cdot \nabla_\Gamma y_\Gamma dS.
\end{align*}

The Laplace-Beltrami operator is given by $\Delta_\Gamma y_\Gamma: =\text{div}(\nabla_\Gamma y_\Gamma)$, for all $y_\Gamma \in H^2(\Gamma;\mathbb{R})$. In particular, the surface divergence theorem holds:
\begin{align*}
\int_\Gamma \Delta_\Gamma y_\Gamma z_\Gamma dS =-\int_\Gamma \nabla_\Gamma y_\Gamma \cdot \nabla_\Gamma z_\Gamma dS,\quad \forall y_\Gamma\in H^2(\Gamma;\mathbb{R}),\quad \forall z_\Gamma \in H^1(\Gamma;\mathbb{R}).
\end{align*}


In order to simplify the notation, here and subsequently, the function spaces refer to complex-valued functions unless otherwise stated.

For $1\leq p\leq +\infty$, we consider the Banach space $\mathbb{L}^p:=L^p(\Omega)\times L^p(\Gamma)$, endowed by the norm given by the relation
\begin{align*}
	\|(u,u_\Gamma)\|_{\mathbb{L}^p}^2:=\|u\|_{L^p(\Omega)}^2 + \|u_\Gamma\|_{L^p(\Gamma)}^2.
\end{align*}

In particular, for $p=2$, the space $\mathbb{L}^2:=L^2(\Omega)\times L^2(\Gamma)$ is a (real) Hilbert space equipped with the scalar product
\begin{align*}
\langle (u,u_\Gamma),(v,v_\Gamma) \rangle_{\mathbb{L}^2}:=\Re\int_\Omega u\overline{v} dx+ \Re\int_\Gamma u_\Gamma \overline{v_\Gamma}dS. 
\end{align*}

For $k\in \mathbb{N}$, we also introduce the space
\begin{align*}
\mathbb{H}^k :=\{(y,y_\Gamma)\in H^k(\Omega)\times H^k(\Gamma)\,;\, y\big|_\Gamma =y_\Gamma\},
\end{align*}
where $H^k(\Omega)$ and $H^k(\Gamma)$ are the usual Sobolev spaces.
\subsection{Main result}  

Our main result states the local null controllability on the space $\mathbb{H}^1$:

\begin{theorem}{}
	\label{main:thm}
	Suppose that $d=2$ or $d=3$. Let $a,b,c>0$, $\alpha, \gamma \in \mathbb{R}\setminus \{0\}$. Then, for every $T>0$ and $\omega \Subset \Omega$, there exists $\delta>0$ such that, for every $(y_0,y_{\Gamma,0})\in \mathbb{H}^1$ satisfying
	\begin{align*}
		\|(u_0,u_{\Gamma,0})\|_{\mathbb{H}^1}\leq \delta,
	\end{align*}
	there exists  a control $h\in L^2(\omega\times (0,T))$
	such that the unique 
corresponding  solution $(u,u_\Gamma)$ of \eqref{intro:eq:01} 
satisfies 
	\begin{align*}
		u(\cdot,T)=0,\text{ in }\Omega,\quad u_\Gamma (\cdot,T)=0,\text{ on }\Gamma,
	\end{align*}
\end{theorem}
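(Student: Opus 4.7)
The plan is to follow the now-classical scheme of Imanuvilov--Fursikov for local null controllability of nonlinear parabolic problems, adapted to the complex-valued and dynamic-boundary setting. First I would linearize \eqref{intro:eq:01} around the trajectory $(0,0)$, which, because the cubic term vanishes to third order at the origin, leads to the linear system
\begin{equation*}
\begin{cases}
\partial_t u - a(1+\alpha i)\Delta u = \mathbbm{1}_\omega h + f, & \text{in }\Omega\times(0,T),\\
\partial_t u_\Gamma + a(1+\alpha i)\partial_\nu u - b(1+\alpha i)\Delta_\Gamma u_\Gamma = f_\Gamma, & \text{on }\Gamma\times(0,T),\\
u=u_\Gamma & \text{on }\Gamma\times(0,T),
\end{cases}
\end{equation*}
with a source term $(f,f_\Gamma)$ that decays sufficiently fast at $t=T$. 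The goal at this stage is a controllability result in weighted spaces: given any $(f,f_\Gamma)$ in a suitable weighted $\mathbb{L}^2$-type space and any $(u_0,u_{\Gamma,0})\in\mathbb{H}^1$, produce a control $h$ and a state $(u,u_\Gamma)$, both in weighted norms, such that $(u(T),u_\Gamma(T))=0$.

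The heart of this step is an observability inequality for the adjoint system, which I would obtain by a global Carleman estimate. The weight must be the standard Fursikov--Imanuvilov time-singular weight $e^{-s\alpha(t,x)}$ with $\alpha(t,x)=(e^{\lambda\Lambda}-e^{\lambda\eta(x)})/(t(T-t))$, where $\eta$ is an auxiliary function satisfying $|\nabla \eta|>0$ off $\omega_0\Subset\omega$ and, crucially, $\partial_\nu\eta\leq 0$ on $\Gamma$ so that the boundary terms appearing after integration by parts from the dynamic condition have the correct sign. Because the principal part is $-a(1+\alpha i)\Delta$ (an operator with complex coefficient), the Carleman calculation is of Schrödinger/parabolic hybrid type and must be performed as in \cite{Fu2006,mercado2022exact}; one separates real and imaginary parts of the conjugated operator and handles the mixed terms by absorbing them into the dominant parabolic-type positive contributions. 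The main new difficulty, and what I expect to be the principal obstacle, is to correctly absorb the boundary Carleman terms produced by the dynamic condition $\partial_t u_\Gamma + a(1+\alpha i)\partial_\nu u - b(1+\alpha i)\Delta_\Gamma u_\Gamma=0$: the tangential derivative terms on $\Gamma$ have to be dominated either by the interior Carleman quantities (via the trace $u=u_\Gamma$) or by the intrinsic Carleman terms that one produces on $\Gamma$ from the Laplace-Beltrami operator, without the help of a boundary control. Once the Carleman estimate in $\mathbb{H}^1$-type norms is obtained, the observability inequality follows in the usual way, and a standard Lax--Milgram/HUM construction on a penalized functional yields the linear controllability in weighted spaces.

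To pass from the linear to the nonlinear problem I would use the inverse mapping theorem of Liusternik (as in the classical Fursikov--Imanuvilov framework, see also Coron's book), rather than a direct fixed-point argument. I would set up Banach spaces $E$ of state/control pairs $(u,u_\Gamma,h)$ defined by the requirement that certain Carleman-weighted norms of $u$, $u_\Gamma$, $h$, and of $\partial_t u - a(1+\alpha i)\Delta u - \mathbbm{1}_\omega h$ and its boundary counterpart are finite; and $F$ of source data $((f,f_\Gamma),(u_0,u_{\Gamma,0}))$. The map
\begin{equation*}
\mathcal{A}(u,u_\Gamma,h)=\bigl((\partial_t u-a(1+\alpha i)\Delta u+c(1+\gamma i)|u|^2u-\mathbbm{1}_\omega h,\,\dots),\,(u(0),u_\Gamma(0))\bigr)
\end{equation*}
is of class $C^1$ between $E$ and $F$ provided the cubic nonlinearity can be controlled: this is exactly where the restriction $d\in\{2,3\}$ enters, through the Sobolev embeddings $H^1(\Omega)\hookrightarrow L^6(\Omega)$ and $H^1(\Gamma)\hookrightarrow L^p(\Gamma)$ that make $|u|^2u$ lie in the dual weighted spaces used for the sources. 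The differential $\mathcal{A}'(0,0,0)$ is precisely the linearized control-to-state map, which the previous step shows to be surjective; Liusternik's theorem then provides a ball around the origin in $F$ that is in the image of $\mathcal{A}$, which translates into the desired $\delta>0$ and control $h$.

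In summary, the three main tasks are (i) a Carleman estimate for the adjoint of the linearized complex-valued system with dynamic boundary conditions, (ii) derivation of controllability with source term in weighted spaces via a penalized HUM functional, and (iii) Liusternik's inverse mapping theorem to close the nonlinear argument. I anticipate that (i), and in particular controlling the boundary Carleman contributions stemming from $\Delta_\Gamma u_\Gamma$ and $\partial_\nu u$ under a geometric sign condition on $\partial_\nu\eta$, will be the principal technical hurdle; everything else is then a matter of bookkeeping in appropriate weighted spaces and of verifying that the cubic nonlinearity behaves well in dimensions $d=2,3$.
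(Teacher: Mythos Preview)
Your proposal is correct and follows essentially the same route as the paper: a Carleman estimate with Fursikov--Imanuvilov weights for the adjoint Ginzburg--Landau operator with dynamic boundary conditions (where the sign condition $\partial_\nu\eta^0<0$ on $\Gamma$ is indeed the key to absorbing the tangential boundary terms), an observability inequality leading via Lax--Milgram to null controllability of the linearized system in Carleman-weighted spaces, and then Liusternik's inverse mapping theorem exploiting the Sobolev embeddings available for $d\in\{2,3\}$ to handle the cubic nonlinearity. The only minor deviation is that the paper uses a direct Lax--Milgram argument on the space $P$ rather than a penalized functional, but this is a standard equivalent variant.
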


To prove Theorem \ref{main:thm} we first deduce a null controllability result for a linear system associated to \eqref{intro:eq:01}:
	\begin{align}
		\label{linear:pb:intro}
		\begin{cases}
			\pt y-a(1+\alpha i)\Delta y=f+\mathbbm{1}_\omega h,&\text{ in }\Omega\times (0,T),\\
			\pt y_\Gamma +a(1+\alpha i)\pnu y-b(1+\alpha i)\Delta_\Gamma y_\Gamma =f_\Gamma,&\text{ on }\Gamma\times (0,T),\\
			y=y_\Gamma,&\text{ on }\Gamma \times (0,T),\\
			(y,y_\Gamma)(0)=(y_0,y_{\Gamma,0}),&\text{ in }\Omega\times \Gamma,
		\end{cases}
	\end{align} 
	where $(f,f_\Gamma)$ will be taken to decrease exponentially to zero in $t=T$. Then, we prove a new Carleman estimate for the adjoint system of \eqref{linear:pb:intro} (see estimate \eqref{linear:system:z} below). This will provide existence of a unique solution to a suitable variational problem, from which we define a solution $(y,y_\Gamma,h)$ to \eqref{linear:pb:intro} such that $y(T)=0$ in $\overline{\Omega}$. Moreover, the solution is such that $e^{C/(T-t)}(y,y_\Gamma,h)\in \mathbb{L}^2 \times L^2(\omega\times (0,T))$, for some constant $C>0$.
	Finally, by an inverse mapping theorem, we deduce the null controllability for the nonlinear system.
	
	The rest of the paper is organized as follows. In Section \ref{section:Existence}, we establish the existence and uniqueness of solutions of \eqref{intro:eq:01} and \eqref{linear:pb:intro}. In Section \ref{Section:Carleman}, we prove a suitable Carleman estimate for the Ginzburg-Landau operator with dynamic boundary conditions. In Section \ref{section:Observability}, we prove the observability estimate for the adjoint system and prove the null controllability of \eqref{linear:pb:intro}. Finally, in Section \ref{section:proof:main:result} we prove the Theorem \ref{main:thm}. 
	
\section{Existence and uniqueness of solutions}
\label{section:Existence}
In this section, we present new results concerning existence and uniqueness for the Ginzburg-Landau equations with dynamic boundary conditions. 

\subsection{Linear problem}
We consider the Cauchy problem
\begin{align}
	\label{linearized:problem:01}
	\begin{cases}
		Lu=f,&\text{ in }\Omega\times (0,T),\\
		L_\Gamma(u,u_\Gamma) =f_\Gamma,&\text{ on }\Gamma\times (0,T),\\
		u=u_\Gamma,&\text{ on }\Gamma\times (0,T),\\
		(u(0),u_\Gamma (0))=(u_0,u_{\Gamma,0}),&\text{ in }\Omega\times \Gamma.
	\end{cases}	
\end{align}
where 
\begin{align}
	\label{def:L:N}
	Lu:=\pt u -a(1+\alpha i)\Delta u,\quad L_\Gamma(u,u_\Gamma):=\pt u_\Gamma +a(1+\alpha i)\pnu u -b(1+\alpha i)\Delta_\Gamma u_\Gamma,
\end{align}
respectively. Notice that the problem \eqref{linearized:problem:01} can be seen in the abstract form
\begin{align}
	\label{abstract:form}
	\begin{cases}
		U'(t)=\mathcal{A}_{GL}U(t)+\mathcal{F}(t),\quad t\in (0,T),\\
		U(0)=U_0,
	\end{cases}
\end{align}
where $\mathcal{A}_{GL}:D(\mathcal{A}_{GL})\subset \mathbb{L}^2 \to \mathbb{L}^2$ is the operator defined by
\begin{align}
	\label{def:AGL}
	\mathcal{A}_{GL}(U):=\left[ 
	\begin{array}{c}
	a(1+\alpha i)\Delta u\\
	-a(1+\alpha i)\pnu u +b(1+\alpha i)\Delta_\Gamma u_\Gamma
	\end{array}	
	\right],\quad \forall\, U:= \left[ 
	\begin{array}{c}
		u\\u_\Gamma
	\end{array}
	\right]\in D(\mathcal{A}_{GL}),   
\end{align}
with domain 
\begin{align*}
	D(\mathcal{A}_{GL}):=\left\{ U=
	\left[\begin{array}{c} 
	u\\u_\Gamma
	\end{array}
	\right]\in \mathbb{H}^1 \,:\, (\Delta u,\Delta_\Gamma u_\Gamma)\in \mathbb{L}^2\right\}
	=\mathbb{H}^2,
\end{align*}
where the last equivalence is justified in \cite{Coclite2009Role} (see also \cite{Gal2015Role}). If $A_W$ is the Wentzell-Laplacian operator introduced in \cite{Maniar2017}, then it is easy to see that $\mathcal{A}_{GL}$ can be written as
\begin{align*}
	\mathcal{A}_{GL}=(1+\alpha i)A_W, \quad D(A_W)=D(\mathcal{A}_{GL})=\mathbb{H}^2.
\end{align*}

Then, arguing as in \cite[Section 2]{Rosier2009} we have the following result:
\begin{proposition}
\label{prop:AGL}
The operator $\mathcal{A}_{GL}$ defined in \eqref{def:AGL} is densely defined and generates an analytic semigroup $(e^{t \mathcal{A}_{GL}})_{t\geq 0}$ in $\mathbb{L}^2$. 
\end{proposition}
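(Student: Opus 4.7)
The plan is to reduce the generation statement to a known fact about the Wentzell--Laplacian $A_W$ and then rotate by the complex factor $1+\alpha i$. The paper already records the algebraic identity $\mathcal{A}_{GL} = (1+\alpha i)A_W$ with $D(\mathcal{A}_{GL}) = D(A_W) = \mathbb{H}^2$, so one only needs to establish (i) density of $\mathbb{H}^2$ in $\mathbb{L}^2$, (ii) a semigroup generation property for $A_W$, and (iii) stability of this property under scalar multiplication by $1+\alpha i$. Density of $\mathbb{H}^2$ in $\mathbb{L}^2$ is standard: $C^\infty(\overline{\Omega})$ pairs (together with their traces on $\Gamma$) belong to $\mathbb{H}^2$ and are dense in $\mathbb{L}^2$ by a cut-off and mollification argument; alternatively one can directly invoke the analysis of the Wentzell realization of the Laplacian carried out in \cite{Maniar2017,Coclite2009Role,Gal2015Role}.

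For the generation step, I would rely on the form method. The natural sesquilinear form on $\mathbb{H}^1$ associated with $-A_W$ is
\[
\mathfrak{a}(U,V) = a\int_\Omega \nabla u \cdot \overline{\nabla v}\,dx + b\int_\Gamma \nabla_\Gamma u_\Gamma \cdot \overline{\nabla_\Gamma v_\Gamma}\,dS,
\]
which is continuous, symmetric and coercive modulo a lower-order term on $\mathbb{L}^2$; integration by parts combined with the surface divergence theorem recalled in the general setting shows that its associated operator is precisely $-A_W$ with domain $\mathbb{H}^2$. Consequently $A_W$ is self-adjoint and non-positive on $\mathbb{L}^2$, hence it generates a bounded analytic semigroup of angle $\pi/2$. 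This is exactly the line of reasoning followed in \cite[Section 2]{Rosier2009} for the Dirichlet case, adapted here to the Wentzell boundary condition $u = u_\Gamma$ on $\Gamma$.

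Finally, I would multiply by $1+\alpha i$. Since $|\arg(1+\alpha i)| = \arctan|\alpha| < \pi/2$, the spectrum of $\mathcal{A}_{GL}$ lies on the closed ray $\{(1+\alpha i)\lambda : \lambda \leq 0\}$, and the resolvent estimate for $A_W$ inside any sector $|\arg(-z)|<\pi/2$ translates, upon the change of variable $z \mapsto (1+\alpha i)z$, into the required resolvent estimate for $\mathcal{A}_{GL}$ in a sector of opening $\pi/2 - \arctan|\alpha|$ around the negative real axis. Therefore $\mathcal{A}_{GL}$ is sectorial and generates an analytic semigroup on $\mathbb{L}^2$ of angle $\pi/2 - \arctan|\alpha|$.

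The main obstacle is not conceptual but bookkeeping: one must verify carefully that integrating $\mathfrak{a}$ by parts reproduces both components of $\mathcal{A}_{GL}$ in \eqref{def:AGL}, in particular that the mixed boundary term $-a\,\partial_\nu u$ arises correctly from the bulk integration by parts and combines with $b\,\Delta_\Gamma u_\Gamma$ coming from the surface divergence theorem. Once this identification is in place, self-adjointness, non-positivity and the rotation argument for $1+\alpha i$ are entirely standard.
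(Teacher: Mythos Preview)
Your proposal is correct and follows essentially the same approach the paper indicates: the paper does not give a detailed proof but merely records $\mathcal{A}_{GL}=(1+\alpha i)A_W$ and refers to \cite[Section 2]{Rosier2009}, and your expansion---form method for $A_W$ giving a non-positive self-adjoint operator, hence an analytic semigroup of angle $\pi/2$, followed by rotation by $1+\alpha i$---is precisely the argument that reference encodes, transplanted from the Dirichlet to the Wentzell setting.
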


According to Proposition \ref{prop:AGL}, the existence and uniqueness of strong solutions of \eqref{abstract:form} in the usual sense are guaranteed. In the next subsection, we provide existence and uniqueness of solutions in appropriate spaces  by energy estimates and density arguments.

\begin{proposition}
	\label{proposition:estimate:L2}
	Suppose that $(u_0,u_{\Gamma,0})\in \mathbb{L}^2$ and $(f,f_\Gamma)\in L^2(0,T;\mathbb{L}^2)$. Then, the weak solution $(u,u_\Gamma)$ of \eqref{linearized:problem:01} belongs to $C^0([0,T];\mathbb{L}^2)\cap L^2(0,T;\mathbb{H}^1)$. Moreover, there exists a constant $C_1>0$ such that the associated solution $(u,u_\Gamma)$ of \eqref{linearized:problem:01} satisfies
	\begin{align}
		\label{prop:weak:solutions:01}
		\|(u,u_\Gamma)\|_{C^0([0,T];\mathbb{L}^2)} + \|(u,u_\Gamma)\|_{L^2(0,T;\mathbb{H}^1)} \leq C_1 \|(f,f_\Gamma)\|_{L^2(0,T;\mathbb{L}^2)} + C_1\|(u_0,u_{\Gamma,0})\|_{\mathbb{L}^2}.
	\end{align}
\end{proposition}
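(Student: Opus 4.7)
The plan is to obtain the estimate first for strong (semigroup) solutions via an energy identity, and then extend it to $\mathbb{L}^2$ initial data and $L^2(0,T;\mathbb{L}^2)$ source terms by density, using Proposition \ref{prop:AGL} to guarantee existence in the abstract sense.

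First, I would take $(u_0,u_{\Gamma,0})\in D(\mathcal{A}_{GL})=\mathbb{H}^2$ and $(f,f_\Gamma)\in C^1([0,T];\mathbb{L}^2)$, so that Proposition \ref{prop:AGL} and standard semigroup theory produce a classical solution $U=(u,u_\Gamma)\in C^0([0,T];\mathbb{H}^2)\cap C^1([0,T];\mathbb{L}^2)$ of \eqref{abstract:form}. On this solution, I would multiply the interior equation by $\overline{u}$, integrate over $\Omega$, and apply Green's identity using the trace identification $u=u_\Gamma$ on $\Gamma$:
\begin{equation*}
\int_\Omega \pt u \, \overline{u}\,dx + a(1+\alpha i)\int_\Omega |\nabla u|^2\,dx - a(1+\alpha i)\int_\Gamma \pnu u\,\overline{u_\Gamma}\,dS = \int_\Omega f\overline{u}\,dx.
\end{equation*}
Then I would use the boundary equation to eliminate the $\pnu u$ term: multiply it by $\overline{u_\Gamma}$, integrate on $\Gamma$, apply the surface divergence theorem, and add the two identities. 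The non-local normal-derivative contributions cancel exactly, leaving
\begin{equation*}
\tfrac{1}{2}\tfrac{d}{dt}\bigl(\|u\|_{L^2(\Omega)}^2+\|u_\Gamma\|_{L^2(\Gamma)}^2\bigr) + a(1+\alpha i)\|\nabla u\|_{L^2(\Omega)}^2 + b(1+\alpha i)\|\nabla_\Gamma u_\Gamma\|_{L^2(\Gamma)}^2 = \langle (f,f_\Gamma),(u,u_\Gamma)\rangle_{\mathbb{L}^2}^{\mathbb{C}}.
\end{equation*}
Taking the real part kills the imaginary factors $\alpha$ in the dissipation, yielding
\begin{equation*}
\tfrac{1}{2}\tfrac{d}{dt}\|(u,u_\Gamma)\|_{\mathbb{L}^2}^2 + a\|\nabla u\|_{L^2(\Omega)}^2 + b\|\nabla_\Gamma u_\Gamma\|_{L^2(\Gamma)}^2 \leq \tfrac{1}{2}\|(f,f_\Gamma)\|_{\mathbb{L}^2}^2 + \tfrac{1}{2}\|(u,u_\Gamma)\|_{\mathbb{L}^2}^2,
\end{equation*}
after a Cauchy--Schwarz and Young bound on the right-hand side.

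From here, Gronwall's inequality gives the $C^0([0,T];\mathbb{L}^2)$ bound in \eqref{prop:weak:solutions:01}, and integrating in time over $[0,T]$ gives the $L^2(0,T;\mathbb{H}^1)$ bound, since the gradient terms control $\|(u,u_\Gamma)\|_{L^2(0,T;\mathbb{H}^1)}^2$ up to the lower-order $L^2(0,T;\mathbb{L}^2)$ part which has just been estimated. Continuity in time with values in $\mathbb{L}^2$ is automatic at this stage thanks to the semigroup regularity.

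Finally, I would remove the extra smoothness assumption on the data by density: approximate $(u_0,u_{\Gamma,0})\in\mathbb{L}^2$ by $(u_0^n,u_{\Gamma,0}^n)\in\mathbb{H}^2$ and $(f,f_\Gamma)\in L^2(0,T;\mathbb{L}^2)$ by smooth $(f^n,f_\Gamma^n)$. The a priori estimate \eqref{prop:weak:solutions:01} applied to differences $u^n-u^m$ shows the corresponding sequence is Cauchy in $C^0([0,T];\mathbb{L}^2)\cap L^2(0,T;\mathbb{H}^1)$, so a limit $(u,u_\Gamma)$ exists in that space, inherits the estimate by passing to the limit, and is easily checked to solve \eqref{linearized:problem:01} in the weak/distributional sense. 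Uniqueness follows from \eqref{prop:weak:solutions:01} applied to the difference of two solutions with the same data. The most delicate step, in my view, is the integration-by-parts identity: one must justify rigorously that on strong solutions the boundary trace of $\pnu u$ pairs correctly with $u_\Gamma$ so that the coupling cancels and no uncontrolled trace term survives; this is the place where the Wentzell-type nature of $\mathcal{A}_{GL}$ (i.e. the fact that $D(\mathcal{A}_{GL})=\mathbb{H}^2$, as recorded from \cite{Coclite2009Role}) is essential.
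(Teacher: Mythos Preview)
Your proposal is correct and follows essentially the same approach as the paper: multiply the interior and boundary equations by $\overline{u}$ and $\overline{u_\Gamma}$, add, take real parts so that the normal-derivative coupling cancels and the $\alpha$-terms disappear, then use Young's inequality to conclude. You supply more detail than the paper (the explicit density argument and the invocation of Gronwall), but the core energy identity and the overall strategy are the same.
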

\begin{proof}
	Firstly, we multiply by $\overline{u}$ the first equation of \eqref{linearized:problem:01} and we integrate in $\Omega$. Secondly, we multiply the second equation of \eqref{linearized:problem:01} by $\overline{u}_\Gamma$ and integrate on $\Gamma$. Next, we add these identities and take the real part on the obtained equation. After integration by parts, this yields
	\begin{align*}
		\begin{split} 
		&\dfrac{1}{2} \dfrac{d}{dt} \left(\int_\Omega |u(t)|^2 dx + \int_\Gamma |u_\Gamma(t)|^2dS \right) +a \int_\Omega |\nabla u(t)|^2 dx + b\int_\Gamma |\nabla_\Gamma u_\Gamma (t)|^2 dS\\
		=&\Re\int_\Omega f(t)\overline{u}(t)dx + \Re\int_\Gamma f_\Gamma (t)\overline{u}_\Gamma (t)dS. 
		\end{split}
	\end{align*} 
	
	By Young's inequality, it is easy to check that 
	\begin{align*}
		\|(u,u_\Gamma)\|_{C^0([0,T];\mathbb{L}^2)}^2 + \|(u,u_\Gamma)\|_{L^2(0,T;\mathbb{H}^1)}^2 \leq C \|(f,f_\Gamma)\|_{L^2(0,T;\mathbb{L}^2)}^2 + C\|(u_0,u_{\Gamma,0})\|_{\mathbb{L}^2}^2,
	\end{align*}
	which clearly implies \eqref{prop:weak:solutions:01}.
\end{proof}

\begin{proposition}
	\label{prop:estimate:H2}
	Let $(u,u_{\Gamma,0})\in \mathbb{H}^1$ and $(f,f_\Gamma)\in L^2(0,T;\mathbb{L}^2)$. Then, the associated weak solution $(u,u_\Gamma)$ of \eqref{linearized:problem:01} belongs to $H^1(0,T;\mathbb{L}^2)\cap C^0([0,T];\mathbb{H}^1)\cap L^2(0,T;\mathbb{H}^2)$. Moreover, there exists a constant $C_2>0$ such that $(u,u_\Gamma)$ satisfies
	\begin{align}
	\label{estimate:H2}
		\begin{split}
			&\|(u,u_\Gamma)\|_{H^1(0,T;\mathbb{L}^2)} + \|(u,u_\Gamma)\|_{C^0([0,T];\mathbb{H}^1)}+\|(u,u_\Gamma)\|_{L^2(0,T;\mathbb{H}^2)}\\
			\leq &C_2\|(f,f_\Gamma)\|_{L^2(0,T;\mathbb{L}^2)} + C_2\|(u_0,u_{\Gamma,0})\|_{\mathbb{H}^1}.
		\end{split}
	\end{align}
\end{proposition}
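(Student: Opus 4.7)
The plan is to derive the estimate \eqref{estimate:H2} by performing a higher-order energy identity on strong solutions (which exist for $\mathbb{H}^2$-data by Proposition \ref{prop:AGL}), then to recover the $L^2(0,T;\mathbb{H}^2)$-regularity from elliptic theory for the Wentzell operator, and finally to extend the estimate to general $\mathbb{H}^1$-data by density combined with Proposition \ref{proposition:estimate:L2}.

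The key step is to test the first equation of \eqref{linearized:problem:01} against $(1-\alpha i)\overline{\pt u}$ and the second against $(1-\alpha i)\overline{\pt u_\Gamma}$. The rotation by $(1-\alpha i)$ is chosen so that $(1+\alpha i)(1-\alpha i)=1+\alpha^2>0$, which turns the principal part of each identity into a real positive multiple of a time-derivative of a gradient norm. Applying Green's identity in $\Omega$ and the surface divergence theorem on $\Gamma$, the two boundary terms of the form $\pm a(1+\alpha^2)\,\Re\int_\Gamma \pnu u\,\overline{\pt u_\Gamma}\,dS$ cancel exactly — the algebraic feature that makes the dynamic boundary condition compatible with this higher-order estimate. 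After adding and taking real parts one gets
\begin{align*}
	\|\pt u\|_{L^2(\Omega)}^2 + \|\pt u_\Gamma\|_{L^2(\Gamma)}^2 &+ \frac{a(1+\alpha^2)}{2}\frac{d}{dt}\|\nabla u\|_{L^2(\Omega)}^2 + \frac{b(1+\alpha^2)}{2}\frac{d}{dt}\|\nabla_\Gamma u_\Gamma\|_{L^2(\Gamma)}^2 \\
	&= \Re\!\left[(1-\alpha i)\!\left(\int_\Omega f\,\overline{\pt u}\,dx + \int_\Gamma f_\Gamma\,\overline{\pt u_\Gamma}\,dS\right)\right].
\end{align*}
Young's inequality on the right-hand side, followed by integration over $(0,t)$, absorbs half of the $\pt$-terms into the left and produces
\begin{align*}
	\|(u,u_\Gamma)\|_{H^1(0,T;\mathbb{L}^2)}^2 + \sup_{t\in[0,T]}\|(u,u_\Gamma)(t)\|_{\mathbb{H}^1}^2 \leq C\,\|(f,f_\Gamma)\|_{L^2(0,T;\mathbb{L}^2)}^2 + C\,\|(u_0,u_{\Gamma,0})\|_{\mathbb{H}^1}^2.
\end{align*}

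Once $(\pt u,\pt u_\Gamma)\in L^2(0,T;\mathbb{L}^2)$ is under control, the $L^2(0,T;\mathbb{H}^2)$-bound follows by viewing the equations, for a.e.\ $t$, as a Wentzell-type elliptic problem: $-a(1+\alpha i)\Delta u = f-\pt u$ in $\Omega$, $a(1+\alpha i)\pnu u - b(1+\alpha i)\Delta_\Gamma u_\Gamma = f_\Gamma - \pt u_\Gamma$ on $\Gamma$, with $u|_\Gamma=u_\Gamma$. The $\mathbb{H}^2$-regularity for this operator (the same fact already invoked in the identification $D(\mathcal{A}_{GL})=\mathbb{H}^2$, see \cite{Coclite2009Role,Gal2015Role,Maniar2017}) gives the pointwise-in-$t$ bound, and integration in time concludes. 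The continuity $(u,u_\Gamma)\in C^0([0,T];\mathbb{H}^1)$ then follows from the standard Lions--Magenes embedding $H^1(0,T;\mathbb{L}^2)\cap L^2(0,T;\mathbb{H}^2)\hookrightarrow C^0([0,T];\mathbb{H}^1)$. The estimate for $\mathbb{H}^1$-data is finally obtained from the estimate for $\mathbb{H}^2$-data by approximating $(u_0,u_{\Gamma,0})$ by a sequence in $\mathbb{H}^2$ and passing to the limit.

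The principal obstacle is precisely the complex coefficient $(1+\alpha i)$: a naive test against $\overline{\pt u}$ produces a cross term of the form $\alpha\,\mathrm{Im}\int\nabla u\cdot\nabla\overline{\pt u}$ that is not controlled by any energy available at this level. The choice of multiplier $(1-\alpha i)$ is the decisive trick that resolves this difficulty: it simultaneously turns the coefficient of each principal term into $1+\alpha^2>0$ (so one obtains a time-derivative of a coercive quadratic form) and produces the opposite-sign $\pnu u$-contributions from the bulk and surface equations that are needed for the cancellation on $\Gamma$.
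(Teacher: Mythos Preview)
Your proof is correct, and your Step~1 coincides exactly with the paper's: the same multiplier $(1-\alpha i)\overline{\pt u}$, the same cancellation of the $\pnu u$ terms between bulk and surface, and the same energy identity leading to the $H^1(0,T;\mathbb{L}^2)\cap C^0([0,T];\mathbb{H}^1)$ bound.

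Where you diverge from the paper is in the recovery of the $L^2(0,T;\mathbb{H}^2)$ estimate. You invoke the Wentzell elliptic regularity $D(\mathcal{A}_{GL})=\mathbb{H}^2$ directly, treating the pair of equations at each time as a single elliptic system with right-hand side $(f-\pt u,\,f_\Gamma-\pt u_\Gamma)\in\mathbb{L}^2$. The paper instead proceeds by hand: it first applies interior Dirichlet elliptic regularity to the bulk equation (with boundary datum $u_\Gamma$), which introduces a $\|u_\Gamma\|_{H^{3/2}(\Gamma)}$ term; then applies elliptic regularity on $\Gamma$ to the surface equation, which introduces a $\|\pnu u\|_{L^2(\Gamma)}$ term; and finally closes the loop via the interpolation inequality $\|\pnu u\|_{L^2(\Gamma)}\leq C_\varepsilon\|u\|_{H^1(\Omega)}+\varepsilon\|u\|_{H^2(\Omega)}$ with $\varepsilon$ small. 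Your route is shorter and leans on a result already cited in the paper; the paper's route is more self-contained and makes the coupling between the two elliptic estimates explicit. Both are valid, and your appeal to Lions--Magenes for $C^0([0,T];\mathbb{H}^1)$ is redundant (you already have it from Step~1) but harmless.
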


\begin{proof}
	The proof is divided into three steps.
	
	\noindent $\bullet$  Step 1: Our first task is to obtain an $L^2(L^2)$ estimates for $(\pt u,\pt u_\Gamma)$, respectively. In order to do that, we multiply the first equation of \eqref{linearized:problem:01} by $(1-\alpha i) \pt \overline{u}$ and integrate in $\Omega$. In addition, we multiply the second equation of \eqref{linearized:problem:01} by $(1-\alpha i)\pt \overline{u}_\Gamma$ and integrate on $\Gamma$. Then, we sum up these identities and take the real part. This yields
	\begin{align*}
		&\int_\Omega |\pt u(t)|^2 dx + \int_\Gamma |\pt u_\Gamma(t)|^2 dS + \dfrac{1}{2}a(1+\alpha^2) \dfrac{d}{dt} \int_\Omega |\nabla u|^2 dx + \dfrac{1}{2} b(1+\alpha^2) \dfrac{d}{dt}\int_\Gamma |\nabla_\Gamma u_\Gamma|^2 dS\\
		=& \Re \int_\Omega (1-\alpha i)f\pt \overline{u} dx + \Re\int_\Gamma (1-\alpha i)f_\Gamma \pt \overline{u}_\Gamma dS. 
	\end{align*} 
	
	Integrating in time the above inequality, we easily get 
	\begin{align}
		\label{eq:estimate:L2:ptu01}
		\|(u,u_\Gamma)\|_{H^1(0,T;\mathbb{L}^2)}^2 + \|(u,u_\Gamma)\|_{C^0([0,T];\mathbb{H}^1)}^2 \leq C\|(f,f_\Gamma)\|_{L^2(0,T;\mathbb{L}^2)}^2 + C\|(u_0,u_{\Gamma,0})\|_{\mathbb{H}^1}^2.
	\end{align}
	\noindent $\bullet$ Step 2: We shall derive $L^2(H^2)$ estimates for $(u,u_\Gamma)$. To do this, we firstly point out that the estimate of $\pt u$ implies that
	\begin{align*}
		\|\Delta u\|_{L^2(0,T;L^2(\Omega))}\leq C\|(f,f_\Gamma)\|_{L^2(0,T;\mathbb{L}^2)} + C\|(u_0,u_{\Gamma,0})\|_{\mathbb{H}^1}.
	\end{align*}

	By elliptic regularity applied to the first equation of \eqref{linearized:problem:01}, we have
	\begin{align*}
		\|u(t)\|_{H^2(\Omega)} \leq C\|f(t)\|_{L^2(\Omega)} + C\|\pt u(t)\|_{L^2(\Omega)} + C\|u_\Gamma (t)\|_{H^{3/2}(\Gamma)},
	\end{align*}
	a.e. in $(0,T)$. Integrating on $t\in [0,T]$, we deduce that 
	\begin{align}
		\label{eq:estimate:H2:Omega}
		\|u\|_{L^2(0,T;H^2(\Omega))}\leq C_* \|(f,f_\Gamma)\|_{L^2(0,T;\mathbb{L}^2)} + C_* \|(u_0,u_{\Gamma,0})\|_{\mathbb{H}^1} + C_* \|u_\Gamma\|_{L^2(0,T;H^{3/2}(\Gamma))},
	\end{align}
	for some constant $C_*>0$. Now, from the second equation of \eqref{linearized:problem:01}, we deduce that 
	\begin{align}
	\nonumber 
		\|u_\Gamma\|_{L^2(0,T;H^2(\Gamma))}\leq &C\|f_\Gamma\|_{L^2(0,T;L^2(\Gamma))} + C\|\pnu u\|_{L^2(0,T;L^2(\Gamma))} + C\|\pt u_\Gamma\|_{L^2(0,T;L^2(\Gamma))}\\
		\label{eq:estimate:H2:Gamma}
		\leq & C \|(f,f_\Gamma)\|_{L^2(0,T;\mathbb{L}^2)} + C\|(u_0,u_{\Gamma,0})\|_{\mathbb{H}^1} + C\|\pnu u\|_{L^2(0,T;L^2(\Gamma))}.
	\end{align}
	
	Moreover, by interpolation inequalities, we have that for every $0<s<1/2$ and $\varepsilon>0$, there are positive constants $C_s$ and $C_\varepsilon$ such that
	\begin{align}
		\label{eq:estimate:pnu}
		\|\pnu u\|_{L^2(0,T;L^2(\Gamma))}\leq C_s \|u\|_{L^2(0,T;H^{3/2+s}(\Omega))}\leq C_\varepsilon \|u\|_{L^2(0,T;H^1(\Omega))}+ \varepsilon \|u\|_{L^2(0,T;H^2(\Omega))}.
	\end{align}
	
	Combining \eqref{eq:estimate:H2:Omega}, \eqref{eq:estimate:H2:Gamma} and \eqref{eq:estimate:pnu} together with estimate \eqref{prop:weak:solutions:01}, we get
	\begin{align}
		\label{eq:estimate:H2:Omega:02}
		\|u\|_{L^2(0,T;H^2(\Omega))}\leq C\|(f,f_\Gamma)\|_{L^2(0,T;L^2(\Omega))} + C\|(u_0,u_{\Gamma,0})\|_{\mathbb{H}^1},
	\end{align}   
	where we have chosen $\varepsilon>0$ small enough. With the estimate \eqref{eq:estimate:H2:Omega:02} at hand, by \eqref{eq:estimate:pnu} we can assert that
	\begin{align}
		\label{eq:estimate:pnu:02}
		\|\pnu u\|_{L^2(0,T;L^2(\Gamma))}\leq C\|(f,f_\Gamma)\|_{L^2(0,T;\mathbb{L}^2)} + C\|(u_0,u_{\Gamma,0})\|_{\mathbb{H}^1}.
	\end{align}
	
	Thus, substituting \eqref{eq:estimate:pnu:02} into \eqref{eq:estimate:H2:Gamma}, we conclude that
	\begin{align}
		\label{eq:estimate:H2:Gamma:02}
		\|u_\Gamma\|_{L^2(0,T;H^2(\Gamma))}\leq C\|(f,f_\Gamma)\|_{L^2(0,T;\mathbb{L}^2)} + C\|(u_0,u_{\Gamma,0})\|_{\mathbb{H}^1}.
	\end{align} 
	
	Finally, combining \eqref{eq:estimate:H2:Omega}, \eqref{eq:estimate:H2:Gamma:02} and \eqref{eq:estimate:L2:ptu01}, we deduce \eqref{estimate:H2}.
\end{proof}


\begin{proposition}
	\label{prop:extra:reg:epsilon:T}
	Let $(f,f_\Gamma)\in L^2(0,T;\mathbb{L}^2)$ and $(u_0,u_{\Gamma,0})\in \mathbb{L}^2$. Then, the associated weak solution $(u,u_\Gamma)$ satisfies
	\begin{align*}
		\begin{split}
			&\|(\sqrt{t}\pt u,\sqrt{t}\pt u_\Gamma)\|_{L^2(0,T;\mathbb{L}^2)}+ \|(\sqrt{t}u,\sqrt{t}u_\Gamma)\|_{L^2(0,T;\mathbb{H}^2)} + \|(\sqrt{t}u,\sqrt{t}u_\Gamma)\|_{C^0([0,T];\mathbb{H}^1)}\\
			\leq & C \|(f,f_\Gamma)\|_{L^2(0,T;L^2(\Omega))} + C\|(u_0,u_{\Gamma,0})\|_{\mathbb{L}^2}.
		\end{split}
	\end{align*}
\end{proposition}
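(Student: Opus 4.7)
The plan is to repeat the energy computations of Proposition \ref{prop:estimate:H2} with an additional time weight $t$, and then use the estimate of Proposition \ref{proposition:estimate:L2} to absorb the ``lower order'' terms produced by the Leibniz rule. Throughout we work with smooth approximations of the data (using Proposition \ref{prop:AGL}) and pass to the limit at the end by density.

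First I would test the first equation of \eqref{linearized:problem:01} against $t(1-\alpha i)\overline{\pt u}$ in $\Omega$, the second against $t(1-\alpha i)\overline{\pt u_\Gamma}$ on $\Gamma$, add them and take real parts. Integration by parts in the Laplacian term produces the normal trace $\pnu u$, which cancels exactly with the corresponding trace term on $\Gamma$ thanks to the coupling $u=u_\Gamma$, exactly as in the proof of Proposition \ref{prop:estimate:H2}. Using
\begin{align*}
t\,\Re\!\int_\Omega (1+\alpha^2)(-\Delta u)\,\overline{\pt u}\,dx = \frac{a(1+\alpha^2)}{2}\frac{d}{dt}\!\Bigl(t\|\nabla u\|_{L^2(\Omega)}^2\Bigr) - \frac{a(1+\alpha^2)}{2}\|\nabla u\|_{L^2(\Omega)}^2 + \text{(boundary)},
\end{align*}
and the analogous identity on $\Gamma$, one arrives at the differential inequality
\begin{align*}
\frac{d}{dt}\!\Bigl(\tfrac{a(1+\alpha^2)}{2}t\|\nabla u\|_{L^2(\Omega)}^2 + \tfrac{b(1+\alpha^2)}{2}t\|\nabla_\Gamma u_\Gamma\|_{L^2(\Gamma)}^2\Bigr)
+ t\|\pt u\|_{L^2(\Omega)}^2 + t\|\pt u_\Gamma\|_{L^2(\Gamma)}^2 \\
\leq C\bigl(\|\nabla u\|_{L^2(\Omega)}^2+\|\nabla_\Gamma u_\Gamma\|_{L^2(\Gamma)}^2\bigr)+C\bigl(\|f\|_{L^2(\Omega)}^2+\|f_\Gamma\|_{L^2(\Gamma)}^2\bigr),
\end{align*}
after a Young inequality (the weight $t$ is bounded by $T$, so the $t|f|^2$ and $t|f_\Gamma|^2$ contributions are harmless). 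Integrating on $(0,s)$ for $s\in(0,T]$ and taking the supremum, the dissipative part of the left-hand side at $t=0$ vanishes because of the factor $t$, so no trace of the initial datum in $\mathbb{H}^1$ is required.

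The right-hand side is controlled: the $L^2(0,T;\mathbb{H}^1)$ term is bounded by Proposition \ref{proposition:estimate:L2} in terms of $\|(f,f_\Gamma)\|_{L^2(0,T;\mathbb{L}^2)} + \|(u_0,u_{\Gamma,0})\|_{\mathbb{L}^2}$. This yields simultaneously
\begin{align*}
\|(\sqrt{t}\pt u,\sqrt{t}\pt u_\Gamma)\|_{L^2(0,T;\mathbb{L}^2)} + \|(\sqrt{t}\,\nabla u,\sqrt{t}\,\nabla_\Gamma u_\Gamma)\|_{C^0([0,T];\mathbb{L}^2)} \leq C\bigl(\|(f,f_\Gamma)\|_{L^2(0,T;\mathbb{L}^2)} + \|(u_0,u_{\Gamma,0})\|_{\mathbb{L}^2}\bigr),
\end{align*}
and together with Proposition \ref{proposition:estimate:L2} (which already gives $\sqrt{t}\,u$ bounded in $C^0([0,T];\mathbb{L}^2)$) this accounts for the $C^0([0,T];\mathbb{H}^1)$ part of the stated estimate.

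Finally, to get the $L^2(0,T;\mathbb{H}^2)$ estimate for $(\sqrt{t}u,\sqrt{t}u_\Gamma)$, I would reproduce verbatim Step~2 of the proof of Proposition \ref{prop:estimate:H2}, multiplying throughout by $\sqrt{t}$: elliptic regularity for the Laplace problem on $\Omega$ with Dirichlet datum $u_\Gamma$ gives
\begin{align*}
\|\sqrt{t}\,u\|_{L^2(0,T;H^2(\Omega))} \leq C\|(f,f_\Gamma)\|_{L^2(0,T;\mathbb{L}^2)}+ C\|\sqrt{t}\,\pt u\|_{L^2(0,T;L^2(\Omega))} + C\|\sqrt{t}\,u_\Gamma\|_{L^2(0,T;H^{3/2}(\Gamma))},
\end{align*}
and the tangential elliptic regularity applied to the boundary equation gives an analogous estimate involving $\sqrt{t}\,\pnu u$, which is absorbed via the interpolation inequality \eqref{eq:estimate:pnu} (choosing $\varepsilon$ small) and the bounds already proved in the first part. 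The main technical point—and the only real obstacle—is to justify these manipulations when $(u_0,u_{\Gamma,0})\in\mathbb{L}^2$ only; this is handled by approximating the data by elements of $\mathbb{H}^1$, applying the estimate to the regularized solutions, and passing to the limit using Proposition \ref{proposition:estimate:L2} and the weak lower semicontinuity of the norms.
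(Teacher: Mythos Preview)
Your proposal is correct and follows exactly the approach the paper indicates: the paper's own proof simply states that it is ``a slight modification of the arguments used in Proposition \ref{prop:estimate:H2}'' and omits the details, and your sketch fills in precisely those details --- inserting the time weight $t$ in Step~1, using the Leibniz rule $t\,\tfrac{d}{dt}F=\tfrac{d}{dt}(tF)-F$ to generate the extra $\|(\nabla u,\nabla_\Gamma u_\Gamma)\|_{\mathbb{L}^2}^2$ term, controlling it via Proposition~\ref{proposition:estimate:L2}, and then repeating Step~2 verbatim with the $\sqrt{t}$ factor. (Minor typo: in your displayed identity the factor $a$ is missing on the left-hand side.)
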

\begin{proof}
	The proof is a slight modification of the arguments used in of Proposition \ref{prop:estimate:H2}. For this reason, we omit the details.
\end{proof}
\begin{remark} 
We point out that Proposition \ref{prop:extra:reg:epsilon:T} implies that, for each $T>0$, $(u_0,u_{\Gamma,0})\in \mathbb{L}^2$, $(f,f_\Gamma)\in L^2(0,T;\mathbb{L}^2)$ and $\varepsilon>0$, the weak solution $(u,u_\Gamma)$ of \eqref{linearized:problem:01} satisfies
\begin{align*}
	(u,u_\Gamma)\in H^1(\varepsilon,T;\mathbb{L}^2)\cap L^2(\varepsilon,T;\mathbb{H}^2)\cap C^0([\varepsilon,T];\mathbb{H}^1). 
\end{align*}
Moreover, there exists a constant $C>0$ (independent of $\varepsilon$) such that 
\begin{align*}
	&\|(u,u_\Gamma)\|_{H^1(\varepsilon,T;\mathbb{L}^2)} + \|(u,u_\Gamma)\|_{L^2(\varepsilon,T;\mathbb{H}^2)} + \|(u,u_\Gamma)\|_{C^0([\varepsilon,T];\mathbb{H}^1)}\\
	\leq & C\|(u_0,u_{\Gamma})\|_{\mathbb{L}^2} + C\|(f,f_\Gamma)\|_{L^2(0,T;\mathbb{L}^2)}.
\end{align*}
\end{remark}

now, we study local solutions of the nonlinear problem 
\begin{align}
	\label{prob:nonlinear:wp}
	\begin{cases}
		Lu+c(1+\gamma i)|u|^2 u=f,&\text{ in }\Omega\times (0,T),\\
		L_\Gamma(u,u_\Gamma)+ c(1+\gamma i)|u_\Gamma|^2 u_\Gamma=f_\Gamma ,&\text{ in }\Gamma\times (0,T),\\
		u=u_\Gamma,&\text{ on }\Gamma\times (0,T),\\
		(u(0),u_\Gamma (0))=(u_0,u_{\Gamma,0}),&\text{ in }\Omega\times \Gamma,
	\end{cases}
\end{align}
where $L$ and $L_\Gamma$ are given by  \eqref{def:L:N}, with $a,b,c>0$ and $\alpha,\gamma\neq 0$.
\begin{proposition}
Let $d=2$ or $d=3$. There exist $\varepsilon>0$ and $C>0$ such that, for every $(f,f_\Gamma)\in L^2(0,T;\mathbb{L}^2)$, $(u_0,u_{\Gamma,0})\in \mathbb{H}^1$ such that 
\begin{align}
	\label{assumption:fu0}
	\|(f,f_\Gamma)\|_{L^2(0,T;\mathbb{L}^2)} + \|(u_0,u_{\Gamma,0})\|_{\mathbb{H}^1}\leq \varepsilon,
\end{align}
there exists a unique solution $(u,u_\Gamma)$ of \eqref{prob:nonlinear:wp} which satisfies
\begin{align*}
	\|(u,u_\Gamma)\|_{C^0([0,T];\mathbb{H}^1)} + \|(u,u_\Gamma)\|_{L^2(0,T;\mathbb{H}^2)}\leq C\left(\|(f,f_\Gamma)\|_{L^2(0,T;\mathbb{L}^2)} + \|(u_0,u_{\Gamma,0})\|_{\mathbb{H}^1} \right). 
\end{align*}
\end{proposition}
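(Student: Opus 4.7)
\medskip

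\noindent\textbf{Proof proposal.} The plan is to set up a Banach fixed-point argument around the linear Cauchy problem \eqref{linearized:problem:01}, using Proposition \ref{prop:estimate:H2} to invert the linear part and Sobolev embeddings (which is where the restriction $d\in\{2,3\}$ enters) to absorb the cubic terms into the source. Define the Banach space
\begin{align*}
Y:=C^0([0,T];\mathbb{H}^1)\cap L^2(0,T;\mathbb{H}^2),\qquad \|(v,v_\Gamma)\|_Y:=\|(v,v_\Gamma)\|_{C^0([0,T];\mathbb{H}^1)}+\|(v,v_\Gamma)\|_{L^2(0,T;\mathbb{H}^2)},
\end{align*}
and, for fixed data $(u_0,u_{\Gamma,0})$ and $(f,f_\Gamma)$ satisfying \eqref{assumption:fu0}, introduce the map $\Lambda:Y\to Y$ that sends $(v,v_\Gamma)$ to the unique solution $(u,u_\Gamma)$ of \eqref{linearized:problem:01} with source
\begin{align*}
\big(f-c(1+\gamma i)|v|^2 v,\ f_\Gamma-c(1+\gamma i)|v_\Gamma|^2 v_\Gamma\big).
\end{align*}
A fixed point of $\Lambda$ is exactly a solution of \eqref{prob:nonlinear:wp}.

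The key step is to bound the cubic terms in $L^2(0,T;\mathbb{L}^2)$. Since $d\leq 3$, the Sobolev embedding $H^1(\Omega)\hookrightarrow L^6(\Omega)$ gives
\begin{align*}
\big\||v|^2 v\big\|_{L^2(0,T;L^2(\Omega))}^2 =\int_0^T\|v(t)\|_{L^6(\Omega)}^6\,dt \leq C\,T\,\|v\|_{C^0([0,T];H^1(\Omega))}^6.
\end{align*}
Since $\Gamma$ has dimension $d-1\leq 2$, the analogous embedding $H^1(\Gamma)\hookrightarrow L^p(\Gamma)$ (for any finite $p$, or $L^\infty$ if $d-1=1$) yields the same control on the boundary piece. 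Combined with Proposition \ref{prop:estimate:H2}, this produces, for some constant $C_0>0$ independent of the data,
\begin{align*}
\|\Lambda(v,v_\Gamma)\|_Y \leq C_0\Big(\|(f,f_\Gamma)\|_{L^2(0,T;\mathbb{L}^2)}+\|(u_0,u_{\Gamma,0})\|_{\mathbb{H}^1}+\|(v,v_\Gamma)\|_Y^{3}\Big).
\end{align*}
Choosing $R>0$ small and then $\varepsilon>0$ so that $C_0\varepsilon+C_0 R^3\leq R$ shows that $\Lambda$ stabilizes the closed ball $B_R\subset Y$.

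For the contraction property I would use the pointwise identity (valid for complex numbers)
\begin{align*}
|a|^2 a-|b|^2 b =\int_0^1 \tfrac{d}{ds}\big(|sa+(1-s)b|^2(sa+(1-s)b)\big)ds,
\end{align*}
which gives $\big||a|^2 a-|b|^2 b\big|\leq C(|a|^2+|b|^2)|a-b|$. Applying Hölder and the same $H^1\hookrightarrow L^6$ embedding yields
\begin{align*}
\big\||v_1|^2v_1-|v_2|^2v_2\big\|_{L^2(0,T;L^2(\Omega))} \leq C\sqrt{T}\,\big(\|v_1\|_{C^0(H^1)}^2+\|v_2\|_{C^0(H^1)}^2\big)\|v_1-v_2\|_{C^0(H^1)},
\end{align*}
with the analogous estimate on $\Gamma$, so by Proposition \ref{prop:estimate:H2} the Lipschitz constant of $\Lambda$ on $B_R$ is bounded by $C R^2$, which is strictly less than $1$ after shrinking $R$ (and accordingly $\varepsilon$). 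The Banach fixed-point theorem produces a unique fixed point in $B_R$, and the a priori bound on $\|(u,u_\Gamma)\|_Y$ follows from the stability estimate applied at the fixed point.

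The main obstacle, and the only delicate point, is the cubic bound in the bulk: it is exactly here that the hypothesis $d\leq 3$ becomes indispensable, since without $H^1(\Omega)\hookrightarrow L^6(\Omega)$ one cannot close the estimate in $L^2(0,T;L^2(\Omega))$ for merely $H^1$-regular data. The corresponding bound on $\Gamma$ is comparatively harmless because the surface is one dimension lower. Everything else (self-map, contraction, uniqueness) is routine once this embedding is in hand.
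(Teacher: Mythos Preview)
Your proposal is correct and follows essentially the same route as the paper: both set up the Banach fixed-point map on $C^0([0,T];\mathbb{H}^1)\cap L^2(0,T;\mathbb{H}^2)$, invoke Proposition~\ref{prop:estimate:H2} for the linear part, and show self-mapping and contraction on a small ball via Sobolev embeddings that place the cubic source in $L^2(0,T;\mathbb{L}^2)$. The only cosmetic difference is in the embeddings used for the contraction step: you work entirely through $H^1\hookrightarrow L^6$ (both in $\Omega$ and on $\Gamma$), whereas the paper splits the product differently and uses $\mathcal{B}\hookrightarrow L^\infty(0,T;\mathbb{L}^4)\cap L^4(0,T;\mathbb{L}^8)$; your choice is arguably more economical and makes the role of the hypothesis $d\le 3$ more transparent.
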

\begin{proof}
We denote $\mathcal{B}=C^0([0,T];\mathbb{H}^1)\cap L^2(0,T;\mathbb{H}^2)$.
 Given $(f,f_\Gamma)\in L^2(0,T;\mathbb{L}^2)$ and $(u_0,u_{\Gamma,0})\in \mathbb{H}^1$,
for each   $(z,z_\Gamma)\in \mathcal{B}$,   
we consider the system
	\begin{align}
		\label{prob:nonlinear:wp2}
		\begin{cases} 
		Lu=f - c(1+\gamma i)|z |^2 z,&\text{ in }\Omega\times (0,T),\\
		L_\Gamma(u,u_\Gamma)=f_\Gamma - c(1+\gamma i)|z_\Gamma|^2 z_\Gamma,&\text{ on }\Gamma\times (0,T),\\
		u=u_\Gamma,&\text{ on }\Gamma\times (0,T),\\
		(u(0),u_\Gamma(0))=(u_0,u_{\Gamma,0}),&\text{ in } \Omega\times \Gamma.
		\end{cases}
	\end{align}

From Proposition \ref{prop:estimate:H2} and the fact that $\mathcal{B} \hookrightarrow  L^6(0,T;\mathbb{L}^6)$ continuously, we have that 
\eqref{prob:nonlinear:wp2} has a unique solution 
$(u, {u}_\Gamma) \in \mathcal B$. Hence, we can define the map $\mathcal{F}:\mathcal{B} \to \mathcal{B}$ given by $\mathcal{F}(z,z_\Gamma)=({u},{u}_\Gamma)$.
Moreover,  we also have that there exists $D>0$ such that 
\begin{align}\label{cotF}
	\|\mathcal{F}(z,z_\Gamma)\|_{\mathcal{B}} \leq D \left(\|(f,f_\Gamma)\|_{L^2(0,T;\mathbb{L}^2)} + \|(u_0,u_{\Gamma,0})\|_{\mathbb{H}^1} + \|(z,z_\Gamma)\|_{\mathcal{B}}^3 \right). 
\end{align}
	
%
	
	Clearly, $(u,u_\Gamma)\in \mathcal{B}$ is a solution of \eqref{prob:nonlinear:wp} if and only if it is a fixed point of the map $\mathcal{F}$. 
We will show that there exists $R>0$ such that the restriction of $\mathcal{F}$ to the closed ball $B_R:= \{(z,z_\Gamma)\in \mathcal{B}\,:\, \|(z,z_\Gamma)\|_{\mathcal{B}}\leq R\}$ is   a  contraction from $B_R$  into $B_R$. Then, the proof will follow from a classic fixed point result.

Indeed, from \eqref{cotF} and  assumption \eqref{assumption:fu0}, 
for each $(z,z_\Gamma) \in B_R$ we have 
\begin{align}
	\label{fixed:point:01}
	\|\mathcal{F}(z,z_\Gamma)\|_{\mathcal{B}}\leq D\left(\varepsilon +R^3\right).
\end{align}

Moreover, for each $(z,z_\Gamma),(w,w_\Gamma) \in B_R$, taking into account the equations satisfied by $\mathcal{F}(z,z_\Gamma)-\mathcal{F}(w,w_\Gamma)$, from Proposition \ref{prop:estimate:H2}
we have  that
\begin{align*}
	\|\mathcal{F}(z,z_\Gamma)-\mathcal{F}(w,w_\Gamma)\|_{\mathcal{B}}^2
	 \leq &  	 D_1 
 \| | z |^2 z - |w|^2w, |{z}_\Gamma |^2 z_\Gamma  - | w_\Gamma |^2w_\Gamma\|_{L^2(0,T;\mathbb{L}^2)}^2 	\\ 
\leq &	D_1 \left\{   \|z-w \|_{L^\infty(0,T; {L}^4(\Omega))}^2
\left( \|z^2 +zw  \|_{L^2(0,T; {L}^4(\Omega))}^2    + \|w \|_{L^4(0,T; {L}^8(\Omega))}^4   \right)  \right. \\
 &	\left. +  \|z_\Gamma-w_\Gamma \|_{L^\infty(0,T; {L}^4(\Gamma))}^2
\left( \| z_\Gamma^2 +z_\Gamma w_\Gamma  \|_{L^2(0,T; {L}^4(\Gamma))}^2    + \|w_\Gamma \|_{L^4(0,T; {L}^8(\Gamma))}^4   \right) \right\},
\end{align*}
%
and then, taking into account that $\mathcal{B} \hookrightarrow  L^\infty(0,T;\mathbb{L}^4) \cap L^4(0,T;\mathbb{L}^8)$, we get that
\begin{align}
	\label{fixed:point:02}
	\|\mathcal{F}(z,z_\Gamma)-\mathcal{F}(w,w_\Gamma)\|_{\mathcal{B}}
	\leq D_2R^2\|(z,z_\Gamma) -(w,w_\Gamma)\|_{\mathcal{B}}.
\end{align}
Therefore, in order to conclude, we choose $R>0$ such that 
$R^2 <   \min \left\{ \dfrac{1}{2D}, \dfrac{1}{D_2}  \right\} $ 
and $\varepsilon>0$ such that $\varepsilon \leq \frac{R}{2D}$. 
From \eqref{fixed:point:01}, \eqref{fixed:point:02} and the  Banach Fixed point Theorem, we get the existence of a  unique fixed point of $\mathcal{F}$.
\end{proof}

\section{A new Carleman estimate for the linear Ginzburg-Landau equation with dynamic boundary conditions}
	\label{Section:Carleman}
	In this section, we deduce a Carleman estimate for the linear Ginzburg-Landau operator with dynamic boundary conditions.
		We start introducing the weight functions that we shall use. For this propose, we recall the following
	\begin{lemma}
	\label{lemma:function:eta0}
		Given a nonempty open set $\omega' \Subset \Omega$, there exists a function $\eta^0 \in C^2(\overline{\Omega})$ such that 
		\begin{align}
			\label{function:eta0}
			\eta^0>0,\text{ in }\Omega,\quad \eta^0=0,\text{ on }\Gamma,\quad |\nabla \eta^0|>0,\text{ in }\overline{\Omega\setminus \omega'}.
		\end{align}
	\end{lemma}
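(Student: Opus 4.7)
This is the classical Fursikov--Imanuvilov weight-function lemma, the starting point of essentially every global Carleman estimate for parabolic problems. My plan is to produce such an $\eta^0$ by combining a standard elliptic construction near the boundary with a Morse-theoretic modification of the critical set in the interior; this is a known result so the task is really to recall the construction.

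First, I would pick an auxiliary open set $\omega''$ with $\omega'' \Subset \omega'$ and produce a first candidate $\phi \in C^2(\overline{\Omega})$ vanishing on $\Gamma$ and positive in $\Omega$. A natural choice is the classical solution of $-\Delta \phi = 1$ in $\Omega$ with $\phi = 0$ on $\Gamma$, whose $C^2$-regularity is guaranteed by the $C^2$-regularity of $\Gamma$. The strong maximum principle yields $\phi>0$ in $\Omega$, and Hopf's lemma gives $\partial_\nu \phi < 0$ on $\Gamma$, so $|\nabla \phi|$ is bounded below on some collar neighborhood of $\Gamma$. The issue is that $\phi$ may still have critical points in the interior, possibly outside $\omega''$.

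Second, I would perturb $\phi$ by a smooth function compactly supported in $\Omega$ so that $\phi + \psi$ is still positive, still vanishes on $\Gamma$, and is a Morse function; a standard density argument from differential topology (Sard-type reasoning) shows that Morse functions are dense in $C^2(\overline{\Omega})$, and requiring the perturbation to be compactly supported in $\Omega$ leaves the boundary values and the collar behaviour near $\Gamma$ untouched. The perturbed function then has only finitely many non-degenerate interior critical points $\{x_1,\dots,x_N\} \subset \Omega$.

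Third, I would build a $C^2$-diffeomorphism $\Phi$ of $\overline{\Omega}$, equal to the identity on a neighborhood of $\Gamma$, that sends each $x_i$ into $\omega''$. Since $\Omega$ is connected, such a $\Phi$ can be assembled by concatenating compactly supported isotopies along smooth paths joining each $x_i$ to a point of $\omega''$. Setting $\eta^0 := (\phi+\psi)\circ \Phi^{-1}$ yields a $C^2$ function that is positive in $\Omega$, vanishes on $\Gamma$, and whose only critical points lie in $\omega'' \subset \omega'$; together with the Hopf-type lower bound on $|\nabla \eta^0|$ near $\Gamma$, this gives $|\nabla \eta^0|>0$ on the closed set $\overline{\Omega\setminus\omega'}$. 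The main obstacle is the diffeomorphism step: one must check that the isotopy neither leaves $\Omega$ nor introduces new critical points and is compatible with the collar behaviour near $\Gamma$. These are precisely the technical points handled in the original Fursikov--Imanuvilov construction, which is why the lemma is typically cited rather than reproved.
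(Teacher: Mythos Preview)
Your sketch is correct and follows the standard Fursikov--Imanuvilov construction. Note, however, that the paper does not actually prove this lemma: it is merely \emph{recalled} as a known result (implicitly referring to \cite{fursikov1996controllability}), so there is no proof in the paper to compare against. Your outline---solving an auxiliary Dirichlet problem to get positivity and the Hopf boundary behaviour, perturbing to a Morse function, then pushing the finitely many critical points into $\omega''$ via a compactly supported isotopy---is precisely the classical argument, and you correctly identify the diffeomorphism step as the only delicate point.
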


	Given $\omega'\Subset \Omega$, we take $\eta^0$ with respect to $\omega'$ as in Lemma \ref{lemma:function:eta0}. For $\lambda,m>1$, we define 
	\begin{align}
		\label{def:varphi}
		\varphi(x,t):=&(t(T-t))^{-1} \left(e^{2\lambda m\|\eta^0\|_\infty} -e^{\lambda (m\|\eta^0\|_\infty +\eta^0(x))} \right), \quad \forall (x,t)\in \overline{\Omega}\times (0,T),\\
		\label{def:xi}
		\xi(x,t):=&(t(T-t))^{-1}e^{\lambda (m\|\eta^0\|_\infty + \eta^0(x))},\quad \forall (x,t)\in \overline{\Omega}\times (0,T). 
	\end{align} 
	\begin{theorem}
	\label{thm:Carleman:estimate}
	Let $\omega \Subset \Omega$. Set $\omega'\Subset \omega$ and $\eta^0$ satisfying \eqref{function:eta0}. Define $\varphi$ and $\xi$ as in \eqref{def:varphi} and \eqref{def:xi}, respectively. Then, there exist constants $C,\lambda_0,s_0>0$ such that for all $\lambda \geq\lambda_0$ and $s\geq s_0$, we have
	\begin{align}
		\label{thm:Carleman:ineq}
		\begin{split}
			&\int_0^T\int_\Omega e^{-2s\varphi} \left(s^3\lambda^4 \xi^3 |v|^2 + s\lambda^2 \xi |\nabla v|^2 + s^{-1}\xi^{-1}|\pt v|^2 + s^{-1}\xi^{-1}|\Delta v|^2 \right) dxdt\\
			&+\int_0^T\int_\Gamma e^{-2s\varphi} (s^3\lambda^3 \xi^3|v_\Gamma|^2 + s\lambda \xi |\nabla_\Gamma v_\Gamma|^2 + s\lambda |\pnu v|^2 +s^{-1}\xi^{-1} |\pt v_\Gamma|^2 + s^{-1}\xi^{-1}|\Delta_\Gamma v_\Gamma|^2 )dSdt\\
			\leq & Cs^3\lambda^4\int_0^T\int_\omega e^{-2s\varphi}\xi^3|v|^2 dxdt+ C \int_0^T \int_\Omega e^{-2s\varphi} |L^*(v)|^2 dxdt \\
			&+ C\int_0^T \int_\Gamma e^{-2s\varphi} |L_\Gamma^*(v,v_\Gamma)|^2 dS dt,
		\end{split}
	\end{align}	
	for all $(v,v_\Gamma)\in H^1(0,T;\mathbb{L}^2)\cap L^2(0,T;\mathbb{H}^2)$, where 
	\begin{align}
		\label{def:L:N:star}
		L^*(v)=\pt v+a(1-\alpha i)\Delta v,\quad L_\Gamma^*(v,v_\Gamma):=\pt v_\Gamma -a(1-\alpha i)\pnu v +b(1-\alpha i)\Delta_\Gamma v_\Gamma.
	\end{align}
	\end{theorem}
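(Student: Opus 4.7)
The plan is to follow the classical Fursikov--Imanuvilov conjugation-and-splitting strategy, adapted to the complex coefficient $(1-\alpha i)$ and to the nontrivial boundary evolution. First, I would set $w := e^{-s\varphi}v$ and $w_\Gamma := e^{-s\varphi}v_\Gamma$, and introduce the conjugated operators $Pw = e^{-s\varphi}L^*(e^{s\varphi}w)$ and $P_\Gamma(w,w_\Gamma) = e^{-s\varphi}L_\Gamma^*(e^{s\varphi}w,e^{s\varphi}w_\Gamma)$. I would then decompose $P = P_1 + P_2 + R$ and $P_\Gamma = P_{\Gamma,1} + P_{\Gamma,2} + R_\Gamma$, where $P_1,P_{\Gamma,1}$ are symmetric in the real $\mathbb{L}^2$ pairing and $P_2,P_{\Gamma,2}$ are antisymmetric, while $R,R_\Gamma$ collect lower-order perturbations (including those arising from the $i\alpha$ contribution, which behave as first-order remainders with controlled Carleman weights).

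Next, starting from
\[
\|Pw\|^2 + \|P_\Gamma(w,w_\Gamma)\|^2 \;\geq\; 2\Re\langle P_1 w, P_2 w\rangle + 2\Re\langle P_{\Gamma,1}(w,w_\Gamma), P_{\Gamma,2}(w,w_\Gamma)\rangle,
\]
I would expand the two cross products by repeated integration by parts in $\Omega$ and on $\Gamma$, taking advantage of $\varphi,\xi \to +\infty$ as $t\to 0,T$ to discard the time-endpoint contributions. The delicate point is that the interior cross product generates boundary contributions involving the traces of $\pnu w$, $\Delta w$ and $\pt w_\Gamma$, which must pair exactly with the intrinsic cross products on $\Gamma$; the compatibility $w|_\Gamma = w_\Gamma$ together with the specific structure of $L_\Gamma^*$ makes this pairing possible, producing the dominant positive terms listed in the statement together with the key boundary trace $s\lambda\int_0^T\!\!\int_\Gamma e^{-2s\varphi}\xi|\pnu w|^2\,|\pnu\eta^0|\,dSdt$, which is positive because $\eta^0=0$ on $\Gamma$ and $\eta^0>0$ inside imply $\pnu \eta^0 \leq -c_0 < 0$.

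Once the conjugated inequality is established for the dominant quantities, I would absorb the lower-order perturbations coming from $R,R_\Gamma$, from $\pt\varphi$, $\ptt\varphi$, and from the $\alpha$-imaginary parts, by choosing $\lambda$ and then $s$ large enough. The local observation $\omega' \Subset \omega$ is enlarged to $\omega$ in the standard way, by inserting a cut-off $\chi\in C_c^\infty(\omega)$ with $\chi\equiv 1$ on $\omega'$ and integrating by parts in $\chi e^{-2s\varphi}\xi|\nabla v|^2$. Finally, the weighted bounds for $\pt v,\Delta v,\pt v_\Gamma,\Delta_\Gamma v_\Gamma$ are recovered by writing $a(1-\alpha i)\Delta v = L^*(v) - \pt v$ and $b(1-\alpha i)\Delta_\Gamma v_\Gamma = -L_\Gamma^*(v,v_\Gamma) + \pt v_\Gamma + a(1-\alpha i)\pnu v$, separating real and imaginary parts, and inserting the already-controlled quantities together with the sources $L^*(v)$ and $L_\Gamma^*(v,v_\Gamma)$, the extra powers $s^{-1}\xi^{-1}$ being chosen precisely to absorb them against the dominant terms.

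The main obstacle I anticipate is the bookkeeping of the boundary contributions: unlike the Dirichlet case, neither the interior nor the boundary cross products vanish on $\Gamma$, and their cancellation is delicate, in particular for terms mixing $\pnu w$ with the $\alpha$-imaginary part of $(1-\alpha i)\Delta$. A secondary technical issue is to choose the parameter $m$ in \eqref{def:varphi} large enough so that the correct powers of $\lambda$ survive in each coefficient after all commutators are computed; this is what reconciles the $s\lambda^2\xi|\nabla v|^2$ interior term with the $s\lambda\xi|\nabla_\Gamma v_\Gamma|^2$ boundary term, which carry different $\lambda$-powers and must both remain positive after the absorption step.
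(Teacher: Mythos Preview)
Your overall strategy matches the paper's: conjugate by $e^{-s\varphi}$, split the conjugated interior and boundary operators, expand the two cross products, and recover the second-order terms a posteriori from the definitions of the pieces. The paper carries this out exactly along the lines you describe, including the use of $\partial_\nu\eta^0\leq -c_0<0$ to make the $|\partial_\nu w|^2$ boundary trace positive.

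There is, however, one concrete point where the generic ``symmetric/antisymmetric in the real $\mathbb{L}^2$ pairing'' split is not enough and the paper makes a deliberate, non-obvious choice. The interior cross product $\Re\langle P_1w,P_2w\rangle_{L^2(\Omega\times(0,T))}$ unavoidably leaves on $\Gamma$ the \emph{bad} term
\[
a^2(1+\alpha^2)\,s\lambda\int_0^T\!\!\int_\Gamma \partial_\nu\eta^0\,\xi\,|\nabla_\Gamma w|^2\,dSdt,
\]
which is negative. If on the boundary you take the natural split of the conjugated $L_\Gamma^*$ (keeping the zero-order trace term $a(1-\alpha i)s\lambda\,\partial_\nu\eta^0\,\xi\,w$ as it comes and moving $a(1-\alpha i)\partial_\nu w$ to $R_\Gamma$), the boundary cross product yields $-ab(1+\alpha^2)s\lambda\int\partial_\nu\eta^0\,\xi\,|\nabla_\Gamma w|^2$, and the net sign is $a(a-b)(1+\alpha^2)$ times a negative integral: this is positive only when $b>a$, which is not assumed. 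The paper fixes this by \emph{artificially} inserting into $P_{\Gamma,1},P_{\Gamma,2}$ the zero-order term with coefficient $\tfrac{2a^2}{b}$ instead of $a$ (and compensating in $R_\Gamma$); with that choice the boundary cross product produces $-2a^2(1+\alpha^2)s\lambda\int\partial_\nu\eta^0\,\xi\,|\nabla_\Gamma w|^2$, which overcompensates the interior contribution for all $a,b>0$. This is the ``delicate pairing'' you anticipate, but it is not a cancellation: it is an engineered domination requiring a specific coefficient, and your proposal should make that explicit. Also, your remark about tuning $m$ to reconcile the $\lambda$-powers is not needed here; the discrepancy $s\lambda^2$ (interior gradient) versus $s\lambda$ (tangential gradient) is intrinsic and survives as stated because the boundary positivity comes with exactly one power of $\lambda$ from $\partial_\nu\eta^0$.
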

	
	\begin{proof}
	For convenience, the proof has been divided into several steps:
	
		\noindent $\bullet$ {\bf Step 1.} In this step, we perform the first estimates of the conjugated variables. For simplicity, we consider $v\in C^\infty(\overline{\Omega}\times [0,T])$, $v_\Gamma=v$, $\lambda \geq \lambda_1 \geq 1$, and $s\geq s_0\geq 1$. define 
		\begin{align*}
			w:=&e^{-s\varphi}v,\quad\text{ in }\overline{\Omega}\times (0,T),\\
			f:=&e^{-s\varphi}(\pt v +a(1-\alpha i)\Delta v),\quad\text{ in }\Omega\times (0,T),\\
			f_\Gamma :=&e^{-s\varphi}(\pt v -a(1-\alpha i)\pnu v + b(1-\alpha i)\Delta_\Gamma v),\quad\text{ on }\Gamma\times (0,T).
		\end{align*}
		
		Straightforward computations show that 
		\begin{align*}
			\nabla \varphi =-\nabla \xi  =-\lambda \xi \nabla \eta^0,\quad \Delta \varphi =-\lambda^2 \xi |\nabla \eta^0|^2 -\lambda \xi \Delta \eta^0,\quad  \pnu \varphi=-\lambda \xi \pnu \eta^0,\text{ on }\Gamma\times (0,T)
		\end{align*}
		with $\pnu \eta^0 \leq c<0$ on $\Gamma$, for some constant $c>0$ and 
		\begin{align*}
			\nabla_\Gamma \varphi =\nabla_\Gamma  \xi =0,\quad \Delta_\Gamma \varphi =\Delta_\Gamma \xi =0,\quad \text{ on }\Gamma\times (0,T).
		\end{align*}
		Then, in $\Omega\times (0,T)$ we have the following identity:
		\begin{align}
			\label{identity:P:Omega:01}
			\begin{split}
				f=&\pt w +a(1-\alpha i)\Delta w +as^2\lambda^2 (1-\alpha i)|\nabla \eta^0|^2 \xi^2 w -as\lambda^2 (1-\alpha i)|\nabla \eta^0|^2 \xi w\\
				&-as\lambda (1-\alpha i)\Delta \eta^0 \xi w -2as\lambda (1-\alpha i)\xi \nabla \eta^0 \cdot \nabla w +s\pt \varphi w.
			\end{split}
		\end{align}
		
		On the other hand, on $\Gamma\times (0,T)$ we have 
		\begin{align}
		\label{identity:P:Gamma:01}
			\begin{split}
				f_\Gamma =&\pt w -a(1-\alpha i)\pnu w +as\lambda (1-\alpha i)\pnu \eta^0 \xi w +b(1-\alpha i)\Delta_\Gamma w\\
				&+s\pt \varphi w
			\end{split}
		\end{align}
		
		Now, we define
		\begin{align}
			\label{def:P1:P2}
			\begin{split} 
			P_1w:=&a(s^2\lambda^2 |\nabla \eta^0|^2 \xi^2 w +\Delta w)+a\alpha i(2s\lambda \xi \nabla \eta^0 \cdot \nabla w +(s\lambda^2 |\nabla \eta^0|^2 +s\lambda \Delta \eta^0)\xi w)\\
			&+s\pt \varphi w
			\\
			P_2 w:=&-a(2s\lambda \xi \nabla \eta^0 \cdot \nabla w +(s\lambda^2 |\nabla \eta^0|^2 +s\lambda \Delta \eta^0)\xi w)-a\alpha i (s^2\lambda^2 |\nabla \eta^0|^2 \xi^2 w +\Delta w)\\
			& + \pt w\\	
			Rw:=&f.
			\end{split}
		\end{align}
		and
		\begin{align}
		\label{def:P1:P2:Gamma}
		\begin{split} 
			P_{\Gamma,1}w:=&b\Delta_\Gamma w-\dfrac{2a^2}{b}\alpha is\lambda \pnu \eta^0 \xi w+s\pt \varphi w, \quad
			P_{\Gamma,2}w:=-\alpha bi\Delta_\Gamma w+\dfrac{2a^2}{b}s\lambda \pnu \eta^0 \xi w+\pt w,\\
			R_{\Gamma}w:=&f_\Gamma -a(1-\alpha i)s\lambda \pnu \eta^0 \xi w+a(1-\alpha i)\pnu w+\dfrac{2a^2}{b} (1-\alpha i)s\lambda \pnu \eta^0 \xi w.
		\end{split}
		\end{align}
		Then, \eqref{identity:P:Omega:01} and \eqref{identity:P:Gamma:01} can be written as 
		\begin{align}
			\label{P1:P2:R}
			&P_1 w+P_2 w=Rw,\quad\text{ in }\Omega\times (0,T),\\
			\label{PG1:PG2:RG} 
			&P_{\Gamma,1}w+P_{\Gamma,2}w=R_\Gamma w,\quad\text{ on }\Gamma\times (0,T).
		\end{align}
		
		Then, taking the $L^2(\Omega\times (0,T))$-norm in \eqref{P1:P2:R} and the $L^2(\Gamma\times (0,T))$-norm in \eqref{PG1:PG2:RG}, we obtain
		\begin{align*}
			\begin{split}
				&\|P_1 w\|_{L^2(\Omega\times (0,T))}^2 + \|P_2 w\|_{L^2(\Omega\times (0,T))}^2 + \|P_{\Gamma,1}w\|_{L^2(\Gamma\times (0,T))}^2 + \|P_{\Gamma,2}w\|_{L^2(\Gamma\times (0,T))}^2 \\
				&+2\langle P_1 w,P_2 w \rangle_{L^2(\Omega\times (0,T))} + 2\langle P_{\Gamma,1}w, P_{\Gamma,2}w \rangle_{L^2(\Gamma\times (0,T))}\\
				=& \|Rw\|_{L^2(\Omega\times (0,T))}^2 + \|R_\Gamma w\|_{L^2(\Gamma\times (0,T))}^2 .
			\end{split}
		\end{align*} 
		\noindent $\bullet$ {\bf Step 2.} In this step, we devote to compute the terms 
		\begin{align*}
			\langle P_1 w,P_2w \rangle_{L^2(\Omega\times (0,T))}=\sum_{j,k=1}^3 I_{jk},
		\end{align*}
		where we have used the notation $I_{jk}$ to denote the real $L^2$ inner product between the $j^{\text{th}}$-term of $P_1w$ and the $k^{\text{th}}$-term of $P_2w$, defined in \eqref{def:P1:P2}. 
		
		Firstly, the term $I_{11}$ can be written as 
		\begin{align*}
			I_{11}:=&-a^2 \Re \int_0^T\int_\Omega (s^2\lambda^2 |\nabla \eta^0|^2 \xi^2 w +\Delta w)(2s\lambda \xi \nabla \eta^0 \cdot \nabla \overline{w} + (s\lambda^2 |\nabla \eta^0|^2 +s\lambda \Delta \eta^0)\xi \overline{w}) dxdt\\
			=&I_{11}^{(1)} + I_{11}^{(2)} + I_{11}^{(3)} + I_{11}^{(4)}.
		\end{align*}
		Integration by parts yields
		\begin{align*}
			I_{11}^{(1)}:=&-2a^2 s^3\lambda^3 \Re \int_0^T\int_\Omega |\nabla \eta^0|^2 \xi^3 w \nabla \eta^0 \cdot \nabla \overline{w} dxdt\\
			=&2a^2 s^3\lambda^3 \Re \int_0^T\int_\Omega \nabla \left( |\nabla \eta^0 |^2 \right) \cdot \nabla \eta^0 \xi^3 |w|^2 dxdt + 6a^2 s^3\lambda^4 \int_0^T\int_\Omega |\nabla \eta^0|^4 \xi^3 |w|^2 dxdt\\
			&-I_{11}^{(1)} -2a^2s^3\lambda^3 \int_0^T\int_\Gamma |\nabla \eta^0|^2 \pnu \eta^0 \xi^3 |w|^2 dxdt +2a^2 s^3\lambda^3 \int_0^T\int_\Omega |\nabla \eta^0|^2 \Delta \eta^0 \xi^3 |w|^2 dxdt.
		\end{align*}
		Then, $I_{11}^{(1)}$ is given by
		\begin{align*}
			\begin{split}
			I_{11}^{(1)}=&a^2 s^3\lambda^3 \int_0^T\int_\Omega \left( \nabla (|\nabla \eta^0|^2)\cdot \nabla \eta^0 + |\nabla \eta^0|^2 \Delta \eta^0 \right)\xi^3 |w|^2 dxdt \\
			&+3a^2 s^3\lambda^4 \int_0^T\int_\Omega |\nabla \eta^0|^4 \xi^3 |w|^2 dxdt-a^2 s^3\lambda^3 \int_0^T\int_\Gamma (\pnu \eta^0)^3 xi^3 |w|^2 dS dt.
			\end{split}
		\end{align*}
		On the other hand, we notice that 
		\begin{align*}
			I_{11}^{(2)}:=-a^2 \int_0^T\int_\Omega (s^3\lambda^4 |\nabla \eta^0|^4 +s^3\lambda^3|\nabla \eta^0|^2 \Delta \eta^0) \xi^3 |w|^2 dxdt
		\end{align*}
		Integrating by parts in space, we have 
		\begin{align*}
			I_{11}^{(3)}:=&-a^2 \Re \int_0^T\int_\Omega \Delta w(2s\lambda \xi \nabla \eta^0 \cdot \nabla \overline{w})dxdt\\
			=&2a^2 s\lambda^2 \int_0^T\int_\Omega \xi |\nabla \eta^0 \cdot \nabla w|^2 dxdt +2a^2 s\lambda \Re \int_0^T\int_\Omega \xi \nabla w \cdot \nabla (\nabla \eta^0 \cdot \nabla \overline{w})dxdt\\
			&-2a^2s\lambda \int_0^T \int_\Gamma \xi \pnu \eta^0 |\pnu w|^2 dSdt.
		\end{align*}
		
		Using the identity
		\begin{align*}
			\nabla w \cdot \nabla (\nabla \eta^0 \cdot \nabla \overline{w})=\nabla^2 \eta^0 (\nabla w,\nabla w)+\dfrac{1}{2} \nabla \eta^0 \cdot \nabla (|\nabla w|^2),\quad \text{ in }\Omega\times (0,T),
		\end{align*}
		we notice that 
		\begin{align*}
			&2a^2 s\lambda \Re \int_0^T\int_\Omega \xi \nabla w\cdot \nabla (\nabla \eta^0 \cdot \nabla \overline{w})dxdt\\
			=&2a^2 s\lambda \int_0^T\int_\Omega \xi \nabla^2 \eta^0 (\nabla w,\nabla \overline{w}) dxdt-a^2s\lambda \int_0^T\int_\Omega \Delta \eta^0 \xi |\nabla w|^2 dxdt\\
			&-a^2 s\lambda^2 \int_0^T\int_\Omega \xi |\nabla \eta^0|^2 |\nabla w|^2 dxdt+a^2 s\lambda \int_0^T\int_\Gamma \pnu \eta^0 \xi (|\nabla_\Gamma w|^2 + |\pnu w|^2)dSdt.
		\end{align*}
		
		Then, $I_{11}^{(3)}$ is given by 
		\begin{align*}
			\begin{split}
				I_{11}^{(3)}=&2a^2 s\lambda^2 \int_0^T\int_\Omega \xi |\nabla \eta^0 \cdot \nabla w|^2 dxdt +2a^2 s\lambda \int_0^T\int_\Omega \xi \nabla^2 \eta^0 (\nabla w,\nabla \overline{w})dxdt\\
				&-a^2 s\lambda \int_0^T\int_\Omega \Delta \eta^0 \xi |\nabla w|^2 dxdt -a^2 s\lambda^2 \int_0^T\int_\Omega \xi |\nabla \eta^0|^2 |\nabla w|^2 dxdt\\
				&-a^2 s\lambda \int_0^T\int_\Gamma \pnu \eta^0 \xi \left( |\pnu w|^2 -|\nabla_\Gamma w|^2 \right) dS dt.
			\end{split}
		\end{align*}
		
		After integration by parts, $I_{11}^{(4)}$ can be written as follows:
		\begin{align*}
			\begin{split} 
			I_{11}^{(4)}
			:=&a^2 \Re \int_0^T\int_\Omega \xi \overline{w} \nabla w\cdot \nabla (s\lambda^2 |\nabla \eta^0|^2 + s\lambda \Delta \eta^0) dxdt\\
			 &+a^2 \Re \iint_0^T\int_\Omega \lambda\xi \left( s\lambda^3 |\nabla \eta^0|^2 +s\lambda^2 \Delta \eta^0 \right)\overline{w} \nabla \eta^0 \cdot \nabla w dxdt\\
			&+a^2 \iint_0^T\int_\Omega (s\lambda^2 |\nabla \eta^0|^2 +s\lambda \Delta \eta^0)\xi |\nabla w|^2 dxdt\\
			&-a^2 \Re \iint_0^T\int_\Gamma (s\lambda^2 |\pnu \eta^0|^2 +s\lambda \Delta \eta^0) \xi (\pnu w) \overline{w}dSdt.
			\end{split} 
		\end{align*}
		
		Then, by \eqref{function:eta0}, $I_{11}$ can be estimated as 
		\begin{align*}
			I_{11}\geq &C \int_0^T\int_\Omega  (s^3\lambda^4\xi^3|w|^2 +s\lambda^2  \xi |\nabla \eta^0 \cdot \nabla w|^2) dxdt\\
			&+Cs\lambda \int_0^T\int_\Gamma \xi |\pnu w|^2 dSdt + a^2s\lambda \int_0^T\int_\Gamma \pnu \eta^0 \xi |\nabla_\Gamma w|^2 dSdt\\
			&-Cs^3\lambda^4 \int_0^T\int_\omega \xi^3 |w|^2 dxdt -X_{11},
		\end{align*}
		where $X_{11}$ satisfies the following upper bound:
		\begin{align*}
			X_{11}\leq &C \int_0^T\int_\Omega (s^3\lambda^3\xi^3 |w|^2 +s\lambda \xi |\nabla w|^2) dxdt\\
			&+C \int_0^T\int_\Gamma \left( s^2\lambda^3 \xi^3 |w|^2 + \lambda  \xi |\pnu w|^2\right) dSdt.
			\end{align*}
		
		It is clear that 
		\begin{align*}
			I_{12}
			=&0.
		\end{align*}
		
		Moreover, we write $I_{13}$ as
		\begin{align*}
			I_{13}
			:=&as^2\lambda^2\Re \int_0^T\int_\Omega |\nabla \eta^0|^2 \xi^2 w\pt \overline{w} dxdt +a\Re \int_0^T\int_\Omega \Delta w \pt \overline{w} dxdt\\
			=&I_{13}^{(1)} + I_{13}^{(2)}. 
		\end{align*}
		Integrating by parts on time and using the fact that $w=0$ in $t=0$ and $t=T$, we have
		\begin{align*}
			I_{13}^{(1)}
			=&-as^2\lambda^2 \int_0^T\int_\Omega |\nabla \eta^0|^2 \xi \pt \xi |w|^2 dxdt.
		\end{align*}
		
		In the same manner, $I_{13}^{(2)}$ gives
		\begin{align*}
			I_{13}^{(2)}
			=&a\Re \int_0^T\int_\Gamma \pnu w \pt \overline{w} dSdt.
		\end{align*}
		
		Then, $I_{13}$ is given by 
		\begin{align*}
			I_{13}
			\geq & -Cs^2\lambda^3 \int_0^T \int_\Omega \xi^3 |w|^2 dxdt-C\int_0^T\int_{\Gamma} (s^{1/2}\xi |\pnu w|^2-s^{-1/2} \xi^{-1} |\pt w|^2) dSdt.
		\end{align*}
		
		Moreover, from the definition of $I_{21}$, we see that  
		\begin{align*}
			I_{21}
			=&0
		\end{align*}
		Similar to the computations of $I_{11}$, $I_{22}$ can be estimated as follows:
		\begin{align*}
			I_{22}\geq &C \int_0^T\int_\Omega \left(s^3\lambda^4\xi^3 |w|^2 +s\lambda^2 \xi|\nabla \eta^0 \cdot \nabla w|^2 \right) dxdt\\
			&+C\int_0^T\int_\Gamma (s^3\lambda^3 \xi^3 |w|^2 +Cs\lambda \xi |\pnu w|^2+a^2\alpha^2 s\lambda \pnu \eta^0 \xi |\nabla_\Gamma w|^2) dSdt\\
			&-Cs^3\lambda^4 \int_0^T\int_\omega\xi^3 |w|^2 dSdt -X_{22},
		\end{align*}
		where $X_{22}$ satisfies
		\begin{align*}
			X_{22}\leq &C \int_0^T\int_\Omega (s^3\lambda^3\xi^3 |w|^2 +Cs\lambda \xi |\nabla w|^2)dxdt\\
			&+C \int_0^T\int_{\Gamma} \left( s^2\lambda^3 \xi^2 |w|^2 + \lambda \xi |\pnu w|^2\right) dSdt.
		\end{align*}
%
%
		To compute the term $I_{23}$, we follow \cite{Rosier2009}. Then, we write 
		\begin{align}
			\nonumber 
			I_{23}
			\nonumber 
			=&\dfrac{1}{2}a\alpha i \int_0^T\int_\Omega (2s\lambda \xi \nabla \eta^0 \cdot \nabla w + (s\lambda^2 |\nabla \eta^0|^2 +s\lambda \Delta \eta^0)\xi w)\pt \overline{w}dxdt\\
			\nonumber 
			&-\dfrac{1}{2}a\alpha i \int_{0}^T\int_\Omega (2s\lambda \xi \nabla \eta^0 \cdot \nabla \overline{w} +(s\lambda^2 |\nabla \eta^0|^2 + s\lambda \Delta \eta^0)\xi \overline{w})\pt w dxdt\\
			\label{estimate:I23}
			=&I_{23}^{(1)} + I_{23}^{(2)}.
		\end{align} 
		
		On one hand, integrating by parts in time we have 
		\begin{align}
			\label{estimate:I23:01}
			\begin{split} 
			I_{23}^{(1)}:=&-\dfrac{1}{2} a\alpha i \int_0^T\int_\Omega (2s\lambda \pt \xi \nabla \eta^0 \cdot \nabla w +2s\lambda \xi \nabla \eta^0 \cdot \nabla \pt w)\overline{w} dxdt\\
			&-\dfrac{1}{2} a\alpha i \int_0^T\int_\Omega \left( (s\lambda^2 |\nabla \eta^0|^2 + s\lambda \Delta \eta^0)\pt \xi |w|^2 +(s\lambda^2|\nabla \eta^0|^2 +s\lambda \Delta \eta^0 )\xi \overline{w}\pt w \right) dxdt.
			\end{split}
		\end{align}
		
		On the other hand, integration by parts in space yields
		\begin{align}
			\label{estimate:I23:02}
			\begin{split} 
			I_{23}^{(2)}=&\dfrac{1}{2}a\alpha i \int_0^T\int_\Omega  \left( 
			s\lambda^2 |\nabla \eta^0|^2 \xi \overline{w}\pt w +s\lambda \xi \overline{w} \nabla \eta^0 \cdot \nabla \pt w + s\lambda \Delta \eta^0 \xi \overline{w}\pt w \right) dxdt\\
			&-\dfrac{1}{2}a\alpha i \int_0^T\int_\Omega (s\lambda^2 |\nabla \eta^0|^2 +s\lambda \Delta \eta^0)\xi \overline{w}\pt w dxdt -a\alpha s\lambda i \int_0^T\int_\Gamma \pnu \eta^0 \xi \overline{w}\pt w dSdt. 
			\end{split}
		\end{align}
		
		Then, substituting \eqref{estimate:I23:01} and \eqref{estimate:I23:02} into \eqref{estimate:I23} and applying the Young's inequality, we show that for all $\epsilon>0$, there exists $C(\epsilon)>0$ such that 
		\begin{align*}
			I_{23}\geq &-C\int_0^T\int_\Omega \left( s^2\lambda^2 \xi^3 |w|^2 +\lambda \xi |\nabla w|^2  \right)dxdt\\
			&-\int_0^T\int_{\Gamma} (\epsilon s^3\lambda^3 \xi^3 |w|^2+C(\epsilon)s^{-1}\lambda^{-1}\xi^{-1} |\pt w|^2)dSdt. 
		\end{align*}
		
		The term $I_{31}$ is
		\begin{align*}
			I_{31}=&-2as^2\lambda \Re \int_0^T\int_\Omega \xi \pt \varphi w\nabla \eta^0 \cdot \nabla \overline{w} dxdt -a\int_0^T\int_\Omega (s^2\lambda^2 |\nabla \eta^0|^2 +s^2 \lambda \Delta \eta^0)\pt \varphi \xi |w|^2 dxdt
		\end{align*}
		where the first term can be computed as
		\begin{align*}
			&-2as^2\lambda \Re \int_0^T\int_\Omega \xi \pt \varphi w\nabla \eta^0 \cdot \nabla \overline{w} dxdt\\
			=& as^2\lambda^2 \int_0^T\int_\Omega (|\nabla \eta^0|^2 \xi \pt \varphi +\xi \nabla \eta^0 \cdot \nabla (\pt \varphi) -\Delta \eta^0 \xi \pt \varphi )|w|^2 dxdt\\
			&-2as^2\lambda \int_0^T\int_\Gamma \pnu \eta^0 \xi \pt \varphi |w|^2 dSdt
		\end{align*}
		
		Then, $I_{31}$ is estimated as 
		\begin{align*}
			I_{31}
			\geq & -Cs^2\lambda^2\int_0^T\int_\Omega \xi^3 |w|^2 dxdt-Cs^2\lambda^2\int_0^T\int_\Gamma \xi^3 |w|^2 dSdt. 
		\end{align*}
		
		Now, for $I_{32}$, it is clear that
		\begin{align*}
			I_{32}
			=&-a\alpha s\Im \int_0^T\int_\Omega w\nabla (\pt \varphi) \cdot \nabla \overline{w} dxdt +a\alpha s\Im \int_0^T\int_\Gamma \pt \varphi w\pnu \overline{w} dSdt\\
			\geq & -C\int_0^T\int_\Omega \left( \xi^3 |w|^2+\xi |\nabla w|^2 \right) dxdt\\
			&-C\int_0^T\int_\Gamma \left(s\xi^3 |w|^2 + s\xi |\pnu w|^2 \right) dSdt.
		\end{align*}
		
		In addition, since $w=0$ in $t=0$ and $t=T$, we have 
		\begin{align*}
			I_{33}\geq -Cs\int_0^T\int_\Omega \xi^3 |w|^2 dxdt.
		\end{align*}
		
		According to the above estimates, taking $s_1$ and $\lambda_1>0$ large enough if it is neccesary and choosing $\epsilon>0$ small enough, we deduce that
		\begin{align}
			\label{product:omega}
			\begin{split}
				&\langle P_1 w,P_2w \rangle_{L^2(\Omega\times (0,T))}\\
				 \geq &Cs^3\lambda^4 \int_0^T\int_\Omega \xi^3 |w|^2 dxdt+C\int_0^T\int_\Gamma (s^3\lambda^3 \xi^3 |w|^2 +s\lambda \xi |\pnu w|^2) dSdt\\
				&+a^2(1+\alpha^2)s\lambda \int_0^T\int_\Gamma \pnu \eta^0 \xi |\nabla_\Gamma w|^2 dSdt-Cs^3\lambda^4 \int_0^T\int_\omega \xi^3 |w|^2 dxdt-\tilde{X},
			\end{split}
		\end{align}
		for $s\geq s_1 >0$ and $\lambda \geq \lambda_1 >0$, where $X$ satisfies 
		\begin{align*}
			X\leq Cs\lambda \int_0^T\int_\Omega \xi |\nabla w|^2 dxdt+Cs^{-1}\lambda^{-1}\int_0^T\int_\Gamma \xi^{-1} |\pt w|^2 dSdt.
		\end{align*}
		
		Before going any further, let us point out that the integral term $|\nabla_\Gamma w|^2$ is negative. However, in the next step we obtain additional terms to solve this problem.
		 
		\noindent $\bullet$ {\bf Step 3.} In this step, we compute the terms
		\begin{align*}
			\int_0^T\int_\Gamma P_{\Gamma,1} w\overline{P_{\Gamma,2}w}dSdt=\sum_{j=1}^3 \sum_{k=1}^3 J_{jk},
		\end{align*}
		where $J_{jk}$ denotes the real $L^2$ inner product of the j$^{\text{th}}$-term of $P_{1,\Gamma} w$ with the k$^{th}$-term of $P_{2,\Gamma} w$ defined in \eqref{def:P1:P2:Gamma}.
		Firstly, we have  
		\begin{align*}
			J_{11}=&b^2\alpha \Im \int_0^T\int_\Gamma |\Delta_\Gamma w|^2 dSdt=0.
		\end{align*}
		Secondly, by the surface divergence theorem, we get
		\begin{align*}
			J_{12}=&2a^2s\lambda \Re \int_0^T\int_\Gamma (\xi \overline{w} \nabla_\Gamma (\pnu \eta^0)\cdot \nabla_\Gamma w)dSdt - 2a^2s\lambda \int_0^T\int_\Gamma \pnu \eta^0 \xi |\nabla_\Gamma w|^2 dSdt\\
			\geq & -2a^2s\lambda \int_0^T\int_\Gamma \pnu \eta^0 \xi |\nabla_\Gamma w|^2 dSdt-C\int_0^T\int_\Gamma (s^2\lambda^2 \xi |w|^2 + \xi |\nabla_\Gamma w|^2) dSdt.
		\end{align*}
		
		Moreover, integrating by parts in time and space, we can assert that
		\begin{align*}
			J_{13}
			=&0.
		\end{align*}		
		
		The term $J_{21}$ is
		\begin{align*}
			J_{21}
			=&-2a^2\alpha^2 s\lambda \Re\int_0^T\int_\Gamma \xi w\nabla_\Gamma (\pnu \eta^0)\cdot \nabla_\Gamma \overline{w} dSdt-2a^2\alpha s\lambda \int_0^T\int_\Gamma \pnu \eta^0 \xi |\nabla_\Gamma w|^2 dSdt\\
			\geq &-2a^2\alpha^2 s\lambda \int_0^T\int_\Gamma \pnu \eta^0 \xi |\nabla_\Gamma w|^2 dSdt-C\int_0^T\int_\Gamma (s^2\lambda^2\xi |w|^2 +\xi |\nabla_\Gamma w|^2)dSdt.
		\end{align*}
		
		Furthermore, by definition, $J_{22}$ is
		\begin{align*}
			J_{22}=0
		\end{align*}
		
		For $J_{23}$, we use Young's inequality to show that for all $\epsilon>0$, there exists $C(\epsilon)>0$ such that
		\begin{align*}
			J_{23}
			\geq & -\int_0^T\int_\Gamma \left(\epsilon s^{3}\lambda^{3}\xi^3 |w|^2 + C(\epsilon) s^{-1}\lambda^{-1} \xi^{-1} |\pt w|^2 \right) dSdt.
		\end{align*}
		
		By the surface divergence theorem and the fact that $\nabla_\Gamma \varphi =0$ on $\Gamma\times (0,T)$, we deduce that 
		\begin{align*}
			J_{31}
			=&0.
		\end{align*}
		
		Moreover, by definition $J_{32}$ is given by
		\begin{align*}
			J_{32} \geq -Cs^2\lambda \int_0^T\int_\Gamma \xi^3 |w|^2 dSdt.
		\end{align*}
		
		On the other hand, integration by parts yields
		\begin{align*}
			J_{33}
			\geq & -Cs\int_0^T\int_\Gamma \xi^3 |w|^2 dSdt.
		\end{align*}

		According to these estimates, we can assert that 
		\begin{align}
		\label{product:boundary}
		\begin{split}
			\langle P_{\Gamma,1} w,P_{\Gamma,2} w \rangle_{L^2(\Gamma\times (0,T))} \geq -2a^2(1+\alpha^2) s\lambda \int_0^T\int_\Gamma \pnu \eta^0 \xi |\nabla_\Gamma w|^2 dSdt - Y, 
		\end{split}
		\end{align}
		where $Y$ satisfies
		\begin{align*}
			Y\leq &\int_0^T\int_\Gamma (\epsilon s^{3}\lambda^{3} \xi^3 |w|^2 +C(\epsilon)s^{-1} \lambda^{-1} \xi^{-1} |\pt w|^2) dSdt\\
			&+C\int_0^T\int_\Gamma \xi |\nabla_\Gamma w|^2 dSdt.
		\end{align*}
		
		From \eqref{product:omega} and \eqref{product:boundary}, using that $\pnu \eta^0 \leq -c <0$ on $\Gamma \times (0,T)$, taking $s_1,\lambda_1>0$ large enough and choosing $\epsilon>0$ small enough, we obtain 

		\begin{align}
			\label{estimate:incomplete}
			\begin{split}
				&\|P_1 w\|_{L^2(\Omega\times (0,T))}^2 + \|P_2 w\|_{L^2(\Omega\times (0,T))}^2 + \|P_{\Gamma,1} w\|_{L^2(\Gamma\times (0,T))}^2+ \|P_{\Gamma,2}w\|_{L^2(\Gamma\times (0,T))}^2\\
				&+C\int_0^T\int_\Omega (s^3\lambda^4\xi^3 |w|^2 +s\lambda^2\xi |\nabla \eta^0 \cdot \nabla w|^2)dxdt\\
				&+C\int_0^T\int_\Gamma (s^3\lambda^3\xi^3 |w|^2 +s\lambda \pnu \eta^0 \xi |\nabla_\Gamma w|^2) dSdt\\
				\leq & \|f\|_{L^2 (\Omega\times (0,T))}^2 + \|f_\Gamma \|_{L^2(\Gamma\times (0,T))} ^2 + s^3\lambda^4\int_0^T\int_\omega \xi^3 |w|^2 dxdt  +Z,
			\end{split}
		\end{align}
		where $Z$ is given by 
		\begin{align*}
			Z\leq &C\int_0^T\int_{\Gamma} (s^{-1/2}\lambda^{-1/2}+s^{-1})\xi^{-1} |\pt w|^2 dSdt\\
			&+Cs\lambda \int_0^T\int_\Omega \xi |\nabla w|^2 dxdt.
		\end{align*} 
		
		In order to absorb the terms of $|\nabla v|$ in $\Omega\times (0,T)$, $\Delta w$ and $\pt w$ on $\Gamma\times (0,T)$, we shall use an indirect estimates using the definitions given in \eqref{P1:P2:R} and \eqref{PG1:PG2:RG}.
		
		\noindent $\bullet$ {\bf Step 4.} In this step, we obtain estimates for the $L^2$-norm of $\Delta w$, $\nabla w$ and $\pt w$ in $\Omega\times (0,T)$. More precisely, the purpose of this step is to prove the following inequality:
		\begin{align}
			\label{estimate:omega:all:01}
			\begin{split}
			&\int_0^T\int_\Omega \left( s^{-1}\xi^{-1} |\Delta w|^2 + s^{-1}\xi^{-1} |\pt w|^2 + s\lambda^2 |\nabla w|^2 \right) dxdt\\
			\leq & C \int_0^T\int_\Omega \left( s^3\lambda^4 \xi |w|^2 + s\lambda^2 \xi |\nabla \eta^0 \cdot \nabla w|^2 +s^{-1}\xi^{-1} |P_1 w|^2 + s^{-1}\xi^{-1} |P_2 w|^2 \right) dxdt\\
			&+C\int_0^T\int_\Gamma \left( s^3\lambda^3 |w|^2 + s\lambda |\pnu w|^2 \right) dSdt.
			\end{split}
		\end{align}
		
		In order to do that, we firstly estimate the term of $\Delta w$. From the definition of $P_1$ in \eqref{def:P1:P2}, it is clear that
		\begin{align}
			\label{estimate:Delta:w:omega}
			\begin{split}
			&s^{-1}\int_0^T\int_\Omega  \xi^{-1} |\Delta w|^2 dxdt\\
			\leq &C \int_0^T\int_\Omega \left(  s^{-1}\xi^{-1} |P_1 w|^2 + s^3\lambda^4 |w|^2 +s\lambda^2  \xi |\nabla \eta^0 \cdot \nabla w|^2 \right) dxdt.
			\end{split}
		\end{align} 
		
		Secondly, we estimate $\nabla w$ as follows: 
		\begin{align*}
			s\lambda^2 \int_0^T\int_\Omega \xi |\nabla w|^2 dxdt =&-s\lambda^2 \Re \int_0^T\int_\Omega w\nabla \xi \cdot \nabla \overline{w} dxdt +s\lambda^2 \Re \int_0^T\int_\Gamma \xi w \pnu \overline{w} dSdt\\
			&-s\lambda^2 \Re \int_0^T\int_\Omega \xi w \Delta \overline{w} dxdt\\
		\end{align*}
		Then, by Young's inequality we obtain
		\begin{align}	 
			\label{estimate:nabla:w:omega}
			\begin{split}
			s\lambda^2 \int_0^T\int_\Omega \xi |\nabla w|^2 dxdt
			\leq & C \int_0^T\int_\Omega \left( s^3\lambda^4 \xi^3 |w|^2 +s\lambda^2 \xi |\nabla \eta^0 \cdot \nabla w|^2 + s^{-1}\xi^{-1} |\Delta w|^2 \right) dxdt \\
			&+C\int_0^T\int_\Gamma (s^3\lambda^3 |w|^2 +s\lambda |\pnu w|^2)dSdt.
			\end{split}
		\end{align} 
		
		Next, we estimate the term of $\pt w$ directly from the definition of $P_2$:
		\begin{align}
		\label{estimate:pt:w:omega}
			s^{-1}\int_0^T\int_\Omega \xi^{-1} |\pt w|^2 dxdt \leq C \int_0^T\int_\Omega \left( s^3\lambda^4 \xi |w|^2 + s\lambda^2 \xi |\nabla \eta^0\cdot \nabla w|^2 + s^{-1}\xi^{-1} |\Delta w|^2 \right) dxdt.
		\end{align}
		
		Thus, combining \eqref{estimate:Delta:w:omega}, \eqref{estimate:nabla:w:omega} and \eqref{estimate:pt:w:omega}, we easily get \eqref{estimate:omega:all:01}.
		
		\noindent $\bullet$ {\bf Step 5} In this step, we shall prove that
			\begin{align}
			\label{estimate:ptw:all:02}
			\begin{split}
			&s^{-1}\int_0^T\int_\Gamma \left( \xi^{-1} |\pt w|^2 + \xi^{-1} |\Delta_\Gamma w|^2 \right) dSdt\\
			\leq & C \int_0^T\int_\Gamma \left(s^2\lambda^2 |w|^2 + \xi^{-1} |P_{\Gamma,1}w|^2 +\xi^{-1}|P_{\Gamma,2} w|^2 \right) dSdt.
			\end{split}
		\end{align}
		
		From one hand, by definition of $P_{\Gamma,1}$ in \eqref{def:P1:P2:Gamma}, we have 
		\begin{align}
			\label{estimate:boundary:01}
			s^{-1}\int_0^T\int_\Gamma \xi^{-1} |\Delta_\Gamma w|^2 dSdt\leq C\int_0^T\int_\Gamma \left( |P_{\Gamma,1} w|^2 + s^2\lambda^2 \xi^3 |w|^2 \right) dSdt.
		\end{align} 
		
		On the other hand, using the definition of $P_{\Gamma,2}$ we deduce that 
		\begin{align}
			\label{estimate:boundary:02}
			s^{-1}\int_0^T\int_\Gamma \xi^{-1} |\pt w|^2 dSdt\leq C \int_0^T\int_\Gamma \left(s^{-1} \xi^{-1} |\Delta_\Gamma w|^2 + s\lambda \xi |w|^2+ |P_{\Gamma,2} w|^2  \right) dSdt.
		\end{align}
		
		Combining \eqref{estimate:boundary:01} and \eqref{estimate:boundary:02}, we obtain \eqref{estimate:ptw:all:02}.

		Finally, combining \eqref{estimate:omega:all:01}, \eqref{estimate:ptw:all:02} and \eqref{estimate:incomplete}, and taking $s,\lambda >0$, we obtain
		\begin{align*}
			\begin{split}
				&\int_0^T \int_\Omega (s^3 \lambda^4 \xi^3 |w|^2 + s\lambda^2 \xi |\nabla w|^2 + s^{-1} \xi^{-1} |\pt w|^2 + s^{-1} \xi^{-1} |\Delta w|^2) dxdt\\
				&+\int_0^T \int_\Gamma \left(s^3\lambda^3 \xi^3 |w|^2 + s\lambda |\pnu w|^2 + s\lambda |\nabla_\Gamma w|^2 + s^{-1}\xi^{-1}|\Delta_\Gamma w|^2 +s^{-1}\xi^{-1}|\pt w|^2 \right) dSdt\\
				\leq & C\|Rw\|_{L^2(\Omega\times (0,T))}^2 +C \|R_\Gamma w\|_{L^2(\Gamma\times (0,T))}^2+ Cs^3\lambda^4 \int_0^T\int_\omega \xi^3 |w|^2 dxdt.
			\end{split}
		\end{align*}
		Finally, taking into account that $w=e^{-s\varphi}$, we come back to the original variable and conclude the inequality \eqref{thm:Carleman:ineq}.
  	\end{proof}

\section{Observability and null controllability of the linear system} 
\label{section:Observability} 	
The goal of this section is to prove a null controllability result for the linear Ginzburg-Landau
\begin{align}
	\label{linear:system:y}
	\begin{cases}
		L(y)=f+\mathbbm{1}_\omega h,&\text{ in }\Omega\times (0,T),\\
		L_\Gamma(y,y_\Gamma)=f_\Gamma,&\text{ in }\Gamma\times (0,T),\\
		y=y_\Gamma,&\text{ on }\Gamma \times (0,T),\\
		(y(0),y_\Gamma (0))=(y_0,y_{\Gamma,0}),&\text{ in }\Omega\times \Gamma.
	\end{cases}
\end{align}  
where the operators $L$ and $L_\Gamma$ were defined in \eqref{def:L:N}. Naturally, the null controllability for \eqref{linear:system:y} can be expressed in the following terms:
\begin{definition}
	We say that \eqref{linear:system:y} is null controllable in $\mathbb{L}^2$ if for all $T>0$, $(y_0,y_{\Gamma,0})\in \mathbb{L}^2$, there exists a control $h\in L^2(\omega\times (0,T))$ such that the associated solution $(y,y_\Gamma)$ of \eqref{linear:system:y} satisfies
	\begin{align*}
		y(\cdot,T)=0,\text{ in }\Omega,\quad y_\Gamma(\cdot,T)=0,\text{ on }\Gamma.
	\end{align*} 
\end{definition}

Following the classical duality between controllability and observability in the context of parabolic equations (see e.g. \cite{fursikov1996controllability}), the null controllability of \eqref{linear:system:y} is equivalent to prove a suitable observability inequality for its adjoint system. Then, thanks to this inequality and the Lax-Milgram lemma, we allow us to deduce the null controllability of \eqref{linear:system:y}, which is crucial for the proof of the Theorem \eqref{main:thm} and an interesting result by itself.
\subsection{Observability inequality}

As we explained above, we introduce the adjoint system
\begin{align}
	\label{linear:system:z}
	\begin{cases}
		L^*z=g,&\text{ in }\Omega\times (0,T),\\
		L_\Gamma^*(z,z_\Gamma) z=g_\Gamma,&\text{ on }\Gamma \times (0,T),\\
		z=z_\Gamma,&\text{ on }\Gamma\times (0,T),\\
		(z(T),z_\Gamma (T))=(z_T,z_{\Gamma,T}),&\text{ in }\Omega\times \Gamma.
	\end{cases}
\end{align}
where $L^*$ and $L_\Gamma^*$ are given by \eqref{def:L:N:star}. We point out that, if $(z_T,z_{\Gamma,T})\in \mathbb{L}^2$, then the associated weak solution $(z,z_\Gamma)$ belongs to $C^0([0,T];\mathbb{L}^2)\cap L^2(0,T;\mathbb{H}^1)$. This is done by results of Section \ref{section:Existence}.

To formulate the result of this subsection, we shall define some additional functions. For $t\in (0,T)$, we define
\begin{align*}
	\mu(t):=&\begin{cases}
		\dfrac{4}{T^2},\quad \text{ if }t\in (0,T/2],\\
		\dfrac{1}{t(T-t)},\quad \text{ if }t\in (T/2,T),
	\end{cases}\\
	\check{\varphi}(t):=&\mu(t)\min_{x\in \overline{\Omega}} \left(e^{2s\lambda m\|\eta^0\|_\infty} - e^{\lambda (m\|\eta^0\|_\infty +\eta^0(x))} \right),\quad t\in (0,T), \\
	\hat{\varphi}(t):=&\mu(t)\max_{x\in \overline{\Omega}} \left(e^{2s\lambda m\|\eta^0\|_\infty} - e^{\lambda (m\|\eta^0\|_\infty +\eta^0(x))} \right),\quad t\in (0,T),\\
	\check{\xi}(t):=&\mu(t)\min_{x\in\overline{\Omega}} e^{\lambda (m\|\eta^0\|_\infty +\eta^0(x))},\quad t\in (0,T),\\
	\hat{\xi}(t):=&\mu(t)\max_{x\in \overline{\Omega}} e^{\lambda (m\|\eta^0\|_\infty + \eta^0(x))},\quad t\in (0,T).
\end{align*}

\begin{proposition}[Observability inequality]
	\label{proposition:observability:inequality}
	Let $d\geq 2$. Suppose that  $(z_T,z_{\Gamma,T})\in \mathbb{L}^2$ and $(g,g_\Gamma)\in L^2(0,T;\mathbb{L}^2)$. Then, there exists a constant $C>0$ such that the associated weak solution $(z,z_\Gamma)\in C^0([0,T];\mathbb{L}^2)\cap L^2(0,T;\mathbb{H}^1)$ of \eqref{linear:system:z} satisfies 
	\begin{align}
		\label{observability:inequality}
		\begin{split}
			&\int_\Omega |z(0)|^2 dx + \int_\Gamma |z_\Gamma(0)|^2 dS + \int_0^T \int_\Omega e^{-2s\hat{\varphi}} (\check{\xi}^3|z|^2 + \check{\xi}|\nabla z|^2) dxdt\\
			&+\int_0^T\int_\Gamma e^{-2s\hat{\varphi}} (\check{\xi}^3|z_\Gamma|^2 + \check{\xi}|\nabla_\Gamma z_\Gamma|^2) dSdt\\
			\leq & C\int_0^T \int_\Omega e^{-2s\check{\varphi}} |g|^2 dxdt+ C\int_0^T \int_\Gamma e^{-2s\check{\varphi}} |g_\Gamma|^2 dSdt+ C\int_0^T\int_\omega e^{-2s\check{\varphi}}\hat{\xi}^3 |z|^2 dxdt.
		\end{split}
	\end{align}
\end{proposition}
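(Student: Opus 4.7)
The plan is to combine the Carleman estimate of Theorem \ref{thm:Carleman:estimate}, applied to $(v,v_\Gamma)=(z,z_\Gamma)$, with an energy estimate for the adjoint system \eqref{linear:system:z} and a careful translation between the singular weights $(\varphi,\xi)$ and the modified weights. First I would apply Theorem \ref{thm:Carleman:estimate} with $s,\lambda$ fixed large enough. On $(T/2,T)$, the factor $\mu(t)=(t(T-t))^{-1}$ coincides with the $t$-factor of $\varphi,\xi$, so the pointwise bounds $\check{\varphi}(t)\leq\varphi(x,t)\leq\hat{\varphi}(t)$ and $\check{\xi}(t)\leq\xi(x,t)\leq\hat{\xi}(t)$ let the weighted integrals on the left-hand side of \eqref{observability:inequality} restricted to $(T/2,T)$ be dominated by the Carleman left-hand side. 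On $(0,T)$ the global bound $\check{\varphi}\leq\varphi$ turns $e^{-2s\varphi}$ into $e^{-2s\check{\varphi}}$ in the source integrals, and the relation $\varphi=\xi(e^{\lambda(m\|\eta^0\|_\infty-\eta^0)}-1)$ together with $\eta^0\geq c_\omega>0$ on $\omega$ yields $\varphi\geq c\xi$ on $\omega$; hence $\xi^k e^{-2s\varphi}\in L^\infty(\omega\times(0,T))$, so $\iint_{\omega\times(0,T)}e^{-2s\varphi}\xi^3|z|^2\leq C\iint_{\omega\times(0,T)}e^{-2s\check{\varphi}}\hat{\xi}^3|z|^2$.

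The initial-data term $\|(z(0),z_\Gamma(0))\|^2_{\mathbb{L}^2}$ and the $(0,T/2]$ piece of the left-hand side of \eqref{observability:inequality} are obtained through an energy estimate. Multiplying the interior equation by $\bar z$ and the boundary equation by $\bar z_\Gamma$, integrating by parts using $z=z_\Gamma$ on $\Gamma$, and taking real parts yields
\begin{equation*}
\dfrac{d}{dt}\|(z,z_\Gamma)\|^2_{\mathbb{L}^2} = 2a\|\nabla z\|^2_{L^2(\Omega)} + 2b\|\nabla_\Gamma z_\Gamma\|^2_{L^2(\Gamma)} + 2\langle(g,g_\Gamma),(z,z_\Gamma)\rangle_{\mathbb{L}^2}.
\end{equation*}
Since the gradient terms are nonnegative, Young's inequality and Gronwall's lemma give $\|(z(\tau),z_\Gamma(\tau))\|^2_{\mathbb{L}^2}\leq C(\|(z(t),z_\Gamma(t))\|^2_{\mathbb{L}^2}+\|(g,g_\Gamma)\|^2_{L^2(\tau,t;\mathbb{L}^2)})$ for $0\leq\tau\leq t\leq T$. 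Setting $\tau=0$ and averaging in $t\in[T/4,T/2]$, and separately averaging in $\tau\in[0,T/2]$, $t\in[T/2,3T/4]$, one concludes that
\begin{equation*}
\|(z(0),z_\Gamma(0))\|^2_{\mathbb{L}^2}+\int_0^{T/2}\|(z,z_\Gamma)\|^2_{\mathbb{L}^2}\,d\tau\leq C\int_{T/4}^{3T/4}\|(z,z_\Gamma)\|^2_{\mathbb{L}^2}\,dt + C\int_0^{3T/4}\|(g,g_\Gamma)\|^2_{\mathbb{L}^2}\,d\tau.
\end{equation*}
On $[T/4,3T/4]$ the weights $e^{-2s\varphi}$ and $\xi$ are bounded above and below by positive constants, so the first integral is dominated by the Carleman left-hand side; on $[0,3T/4]$ the weight $e^{-2s\check{\varphi}}$ is likewise bounded below, so the second integral is dominated by the weighted source terms of \eqref{observability:inequality}. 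Finally, since $e^{-2s\hat{\varphi}}\check{\xi}^3$ is a positive constant on $(0,T/2]$, the missing piece of the observability left-hand side on $(0,T/2]$ reduces to $C\int_0^{T/2}\|(z,z_\Gamma)\|^2_{\mathbb{L}^2}d\tau$, which has just been controlled. Combining everything yields \eqref{observability:inequality}.

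The main obstacle I anticipate is the weight bookkeeping. The singular weights $\varphi,\xi$ blow up at both $t=0$ and $t=T$, whereas the modified weights are designed to be constant on $(0,T/2]$, so the pointwise sandwich $\check{\varphi}\leq\varphi\leq\hat{\varphi}$ is not available there from the right. One must exploit the exponential-over-polynomial decay $\xi^k e^{-2s\varphi}\in L^\infty(\omega\times(0,T))$ (which follows from $\varphi\geq c\xi$ on $\omega$) to translate the observation term, and handle the $(0,T/2]$ portion of the observability left-hand side through the energy estimate rather than via a direct pointwise comparison with the Carleman left-hand side.
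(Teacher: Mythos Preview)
Your overall strategy coincides with the paper's: apply the Carleman estimate of Theorem~\ref{thm:Carleman:estimate} to control the $(T/2,T)$ portion of the left-hand side of \eqref{observability:inequality}, and use an energy argument on the adjoint system to recover $\|(z(0),z_\Gamma(0))\|_{\mathbb{L}^2}^2$ and the $(0,T/2]$ portion. The paper carries out the energy step by introducing a cut-off $\theta\in C^1([0,T])$ with $\theta\equiv 1$ on $[0,T/2]$ and $\theta\equiv 0$ on $[3T/4,T]$, and applying Proposition~\ref{proposition:estimate:L2} to $(\theta z,\theta z_\Gamma)$; your direct Gronwall argument from the energy identity achieves the same thing.

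There is, however, one gap. The left-hand side of \eqref{observability:inequality} contains, on $(0,T/2]$, not only the weighted $\mathbb{L}^2$ norm of $(z,z_\Gamma)$ but also the gradient terms $\check{\xi}|\nabla z|^2$ and $\check{\xi}|\nabla_\Gamma z_\Gamma|^2$. Since $e^{-2s\hat\varphi}\check{\xi}$ is a positive constant on $(0,T/2]$, this piece reduces to $C\int_0^{T/2}\|(z,z_\Gamma)\|_{\mathbb{H}^1}^2\,d\tau$, not merely to $C\int_0^{T/2}\|(z,z_\Gamma)\|_{\mathbb{L}^2}^2\,d\tau$ as you wrote. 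Your Gronwall inequality $\|(z(\tau),z_\Gamma(\tau))\|_{\mathbb{L}^2}^2\leq C(\|(z(t),z_\Gamma(t))\|_{\mathbb{L}^2}^2+\|(g,g_\Gamma)\|_{L^2(\tau,t;\mathbb{L}^2)}^2)$ does not by itself control the gradients. The fix is immediate from your own energy identity: integrating it from $0$ to $t$ gives
\[
2a\int_0^t\|\nabla z\|_{L^2(\Omega)}^2\,ds+2b\int_0^t\|\nabla_\Gamma z_\Gamma\|_{L^2(\Gamma)}^2\,ds
=\|(z(t),z_\Gamma(t))\|_{\mathbb{L}^2}^2-\|(z(0),z_\Gamma(0))\|_{\mathbb{L}^2}^2-2\int_0^t\langle(g,g_\Gamma),(z,z_\Gamma)\rangle_{\mathbb{L}^2}\,ds,
\]
and the right-hand side is bounded by $C\bigl(\|(z(t),z_\Gamma(t))\|_{\mathbb{L}^2}^2+\|(g,g_\Gamma)\|_{L^2(0,t;\mathbb{L}^2)}^2\bigr)$ once you feed in your pointwise $\mathbb{L}^2$ estimate. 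Averaging $t$ over $[T/2,3T/4]$ then yields the missing $\int_0^{T/2}\|(z,z_\Gamma)\|_{\mathbb{H}^1}^2\,d\tau$ bound. The paper avoids this extra step because Proposition~\ref{proposition:estimate:L2} applied to $(\theta z,\theta z_\Gamma)$ already delivers the $L^2(0,T;\mathbb{H}^1)$ norm directly. Your observation that $\varphi\geq c\,\xi$ (hence $\xi^3 e^{-2s\varphi}\in L^\infty$) is exactly what is needed to pass from $e^{-2s\varphi}\xi^3$ to $e^{-2s\check\varphi}\hat\xi^3$ in the observation term on $(0,T/2]$, where $\xi\leq\hat\xi$ fails pointwise; the paper uses this implicitly.
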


\begin{proof}
	Consider a cut-off function $\theta\in C^1([0,T];\mathbb{R})$ such that 
	\begin{align*}
		0\leq \theta(t)\leq 1,\quad \forall t\in [0,T],\quad \theta(t)=1,\quad \forall t\in [0,T/2],\quad \theta(t)=0,\quad \forall t\in [3T/4,T]. 
	\end{align*}
	Then, the new variables $(\tilde{z},\tilde{z}_\Gamma)=(\theta z,\theta z_\Gamma)$ satisfies 
	\begin{align*}
		\begin{cases}
			L^*\tilde{z}=\theta g+\theta' z,&\text{ in }\Omega\times (0,T),\\
			L_\Gamma^*(\tilde{z},\tilde{z}_\Gamma)=\theta g_\Gamma +\theta'z_\Gamma,&\text{ on }\Gamma\times (0,T),\\
			\tilde{z}=\tilde{z}_\Gamma,&\text{ on }\Gamma\times (0,T),\\
			(\tilde{z}(T),\tilde{z}_\Gamma(T))=(0,0),&\text{ in }\Omega\times \Gamma. 
		\end{cases}
	\end{align*} 
	
	Then, by estimate \eqref{proposition:estimate:L2}, we can assert that 
	\begin{align*}
		&\|(\tilde{z},\tilde{z}_\Gamma)\|_{L^\infty(0,T;\mathbb{L}^2)}^2 + \|(\tilde{z},\tilde{z}_\Gamma)\|_{L^2(0,T;\mathbb{H}^1)}^2\\
		\leq & C\|(\theta g,\theta g_\Gamma)\|_{L^2(0,T;\mathbb{L}^2)}^2 + C\|(\theta' z,\theta ' z_\Gamma)\|_{L^2(0,T;\mathbb{L}^2)}^2.
	\end{align*}
	
	In particular, we can deduce that 
	\begin{align*}
		&\|(z(0),z_\Gamma (0))\|_{\mathbb{L}^2}^2 + \|(z,z_\Gamma)\|_{L^2(0,T/2;\mathbb{H}^1)}^2\\
		\leq & C \|(g,g_\Gamma)\|_{L^2(0,3T/4;\mathbb{L}^2)}^2 + C\|(z,z_\Gamma)\|_{L^2(T/2,3T/4;\mathbb{L}^2)}^2.
	\end{align*}
	
	Since $e^{-2s\hat{\varphi}}\geq C>0$, for each $t\in [0,T/2]$, we have
	\begin{align}
		\label{obs:ineq:02}
		\begin{split}
			&\int_\Omega |z(0)|^2 dx + \int_\Gamma |z_\Gamma (0)|^2 dS+\int_0^{T/2} \int_\Omega e^{-2s\hat{\varphi}} (|z|^2 + |\nabla z|^2) dxdt\\
			&+\int_0^{T/2}\int_\Gamma e^{-2s\hat{\varphi}} (|z_\Gamma|^2 + |\nabla_\Gamma z|^2)dSdt\\
			\leq & C\int_0^{3T/4}\int_\Omega |g|^2 dxdt + C\int_0^{3T/4}\int_\Gamma |g_\Gamma|^2 dSdt+C \int_{T/2}^{3T/4}\int_\Omega |z|^2 dxdt\\
			&+C\int_{T/2}^{3T/4}\int_\Gamma |z_\Gamma|^2 dSdt. 
		\end{split}
	\end{align} 
	In order to estimate the last two terms of \eqref{obs:ineq:02}, we use the fact that 
	\begin{align*}
		0<C\leq e^{-2s\varphi(x,t)}\xi^3,\quad \forall t\in (T/2,3T/4),
	\end{align*}
	to obtain
	\begin{align}
		\nonumber 
		&\int_{T/2}^{3T/4} \int_\Omega |z|^2 dxdt + \int_{T/2}^{3T/4}\int_\Gamma |z_\Gamma|^2 dSdt\\
		\nonumber 
		\leq & \int_{T/2}^{3T/4}\int_\Omega e^{-2s\varphi} \xi^3 |z|^2 dxdt + C\int_{T/2}^{3T/4}\int_\Gamma e^{-2s\varphi} \xi^3 |z_\Gamma|^2 dSdt\\
		\label{obs:ineq:03}
		\leq & C\int_0^T\int_\Omega e^{-2s\varphi}|g|^2 dxdt+C\int_0^T\int_\Gamma e^{-2s\varphi} |g_\Gamma|^2 dSdt+C\int_0^T\int_\omega e^{-2s\varphi}\xi^3 |z|^2 dxdt,
	\end{align} 
	where we have used the Carleman estimate in Theorem \ref{thm:Carleman:estimate} with $s>0$ and $\lambda>0$ fixed. Now, combining \eqref{obs:ineq:02} and \eqref{obs:ineq:03} and using that $-\check \varphi(t)\geq -\varphi(x,t)$ for all $(x,t)\in \overline{\Omega}\times (0,T)$,
	\begin{align}
		\label{obs:ineq:04}
		\begin{split}
			&\int_\Omega |z(0)|^2 dx + \int_\Gamma |z_\Gamma (0)|^2 dS+\int_0^{T/2}\int_\Omega e^{-2s\hat{\varphi}}(\check{\xi}^3|z|^2 + \check{\xi}|\nabla z|^2)dxdt\\
			&+\int_0^{T/2}\int_\Gamma e^{-2s\hat{\varphi}}(\check{\xi}^3|z_\Gamma|^2 + \check{\xi}|\nabla_\Gamma z_\Gamma|^2) dSdt\\
			\leq & C\int_0^T\int_\Omega e^{-2s\check{\varphi}} |g|^2 dxdt + C\int_0^T\int_\Gamma e^{-2s\check{\varphi}} |g_\Gamma|^2 dSdt + C\int_0^{T}\int_\omega e^{-2s\check{\varphi}} \hat{\xi}^3 |z|^2 dxdt. 
		\end{split}
	\end{align} 
	
	On the other hand, by Carleman estimate \eqref{thm:Carleman:ineq} again, the following inequality holds
	\begin{align}
		\label{obs:ineq:05}
		\begin{split}
			&\int_{T/2}^T\int_\Omega e^{-2s\hat{\varphi}} (\check{\xi}^3|z|^2 +\check{\xi} |\nabla z|^2) dxdt+\int_{T/2}^T\int_\Gamma e^{-2s\hat{\varphi}} (\check{\xi}^3|z_\Gamma|^2+ \check{\xi}|\nabla_\Gamma z_\Gamma|^2) dSdt\\
			\leq & C \int_0^T\int_\Omega e^{-2s\check{\varphi}}|g|^2 dxdt + C\int_0^T\int_\Gamma e^{-2s\check{\varphi}} |g_\Gamma|^2 dSdt+ C\int_0^T\int_\omega e^{-2s\check{\varphi}}\hat{\xi}^3|z|^2 dxdt.
		\end{split}
	\end{align}
	
	Finally, adding inequalities \eqref{obs:ineq:04} and \eqref{obs:ineq:05}, we deduce the observability inequality \eqref{observability:inequality}. This ends the proof of the Proposition \ref{proposition:observability:inequality}.  
\end{proof}

\subsection{Proof of the null controllability for the linear system}
From the observability inequality \eqref{observability:inequality}, we can deduce  the null controllability of the linear system \eqref{linear:system:y}. In the following, for $r\in [1,+\infty]$, a linear space $\mathcal{H}$ 
and a measurable function $\rho:(0,T) \longrightarrow \R$, 
we shall consider the notation
\begin{align*}
	L^r(\rho (0,T);\mathcal{H})=\{y\in L^r(0,T;\mathcal{H}); \rho y\in L^r(0,T;\mathcal{H})\}.
\end{align*} 

Consider the Banach space $\mathcal{V}$ defined by
\begin{align}
	\label{def:V}
	\begin{split}
	\mathcal{V}:=\{
	(y,y_\Gamma,h) :& (y,y_\Gamma)\in L^2(e^{s\check{\varphi}}(0,T);\mathbb{L}^2),\, h \mathbbm{1}_\omega \in L^2(e^{s\check{\varphi}}\hat{\xi}^{-3/2}(0,T); L^2(\Omega)), \\
	 & (L(y)-\mathbbm{1}_\omega h,L_\Gamma(y,y_\Gamma))\in L^2(e^{s\hat{\varphi}}\check{\xi}^{-3/2} (0,T);\mathbb{L}^2),\\
	 & (y,y_\Gamma)\in L^2(e^{\frac{1}{3}s\hat{\varphi}} (0,T); \mathbb{H}^2) \cap L^\infty(e^{\frac{1}{3}s\hat{\varphi}}(0,T);\mathbb{H}^1) 
	\},
	\end{split}
\end{align}
endowed by its natural norm:
\begin{align*}
	\|(y,y_\Gamma,h)\|_\mathcal{V}^2 :=& \|e^{s\check{\varphi}}(y,y_\Gamma)\|_{L^2(0,T;\mathbb{L}^2)}^2 + \|e^{s\check{\varphi}} \hat{\xi}^{3}h\mathbbm{1}_\omega\|_{L^2(\Omega\times (0,T))}^2\\
	& +\|e^{s\hat{\varphi}}\check{\xi}^{-3/2} (L(y)-\mathbbm{1}_\omega h,N(y,y_\Gamma))\|_{L^2(0,T; \mathbb{L}^2)}^2\\
	&+\|e^{\frac{1}{3}s\hat{\varphi}}(y,y_\Gamma)\|_{L^2(0,T;\mathbb{H}^2)}^2+ \|e^{\frac{1}{3}s\hat{\varphi}} (y,y_\Gamma)\|_{L^\infty(0,T;\mathbb{H}^1)} ^2.   
\end{align*}
\begin{proposition}
	\label{proposition:null:controllability}
	Let $(y_0,y_{\Gamma,0})\in \mathbb{L}^2$ and assume that 
	\begin{align}
		\label{assump:f:fGamma}
		(f,f_\Gamma)\in L^2(e^{s\hat{\varphi}}\check{\xi}^{-3/2} (0,T);\mathbb{L}^2).
	\end{align}
	Then, we can find a control $h$ such that the associated solution $(y,y_\Gamma)$ of \eqref{linear:system:y} satisfies $(y,y_\Gamma,h)\in \mathcal{V}$. In particular, we have
	\begin{align*}
		y(\cdot, T)=0,\text{ in }\Omega,\quad y_\Gamma (\cdot, T)=0,\text{ on }\Gamma.
	\end{align*} 
\end{proposition}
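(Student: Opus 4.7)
My plan is to follow the Fursikov--Imanuvilov duality strategy: translate the null controllability of \eqref{linear:system:y} into a variational problem whose solvability follows from the observability inequality \eqref{observability:inequality}, and then upgrade the regularity of the resulting trajectory to the level required by $\mathcal V$ by means of Propositions \ref{prop:estimate:H2} and \ref{prop:extra:reg:epsilon:T} applied to a weighted version of $(y,y_\Gamma)$.

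First I would introduce the test space
\[
P_0 := \{(w,w_\Gamma)\in C^2(\overline{\Omega}\times[0,T])\times C^2(\overline{\Gamma}\times[0,T])\,:\, w|_\Gamma = w_\Gamma\}
\]
and the bilinear form
\[
a((z,z_\Gamma),(w,w_\Gamma)) := \iint_{\Omega\times(0,T)} e^{-2s\check\varphi}L^* z\,\overline{L^* w}\,dxdt + \iint_{\Gamma\times(0,T)} e^{-2s\check\varphi}L_\Gamma^*(z,z_\Gamma)\,\overline{L_\Gamma^*(w,w_\Gamma)}\,dSdt + \iint_{\omega\times(0,T)} e^{-2s\check\varphi}\hat\xi^3 z\overline{w}\,dxdt.
\]
Inequality \eqref{observability:inequality}, applied to the adjoint system \eqref{linear:system:z} with terminal data $(w(T),w_\Gamma(T))$ and source $(L^* w, L_\Gamma^*(w,w_\Gamma))$, shows that $a$ is positive definite on $P_0$; let $(P,a)$ be its Hilbert completion. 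Cauchy--Schwarz combined with \eqref{observability:inequality} and the decay hypothesis \eqref{assump:f:fGamma} then gives that
\[
\ell(w,w_\Gamma) := \iint_{\Omega\times(0,T)} f\overline{w}\,dxdt + \iint_{\Gamma\times(0,T)} f_\Gamma\overline{w_\Gamma}\,dSdt + \int_\Omega y_0\overline{w(0)}\,dx + \int_\Gamma y_{\Gamma,0}\overline{w_\Gamma(0)}\,dS
\]
is continuous on $(P_0,a)$ and hence extends uniquely to $P$. Riesz representation provides a unique $(\hat w,\hat w_\Gamma)\in P$ with $a((\hat w,\hat w_\Gamma),\cdot)=\ell(\cdot)$.

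Setting
\[
y := e^{-2s\check\varphi}L^*\hat w,\qquad y_\Gamma := e^{-2s\check\varphi}L_\Gamma^*(\hat w,\hat w_\Gamma),\qquad h := -e^{-2s\check\varphi}\hat\xi^3\hat w \ \text{in } \omega\times(0,T),
\]
a standard integration by parts in the variational identity, with smooth test functions $(w,w_\Gamma)\in P_0$ and using the sign conventions of \eqref{def:L:N} and \eqref{def:L:N:star}, shows that $(y,y_\Gamma)$ is a weak solution of \eqref{linear:system:y} with initial data $(y_0,y_{\Gamma,0})$ and with $y(\cdot,T)=0$, $y_\Gamma(\cdot,T)=0$. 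Moreover, the identity $a((\hat w,\hat w_\Gamma),(\hat w,\hat w_\Gamma))=\ell(\hat w,\hat w_\Gamma)<\infty$ produces directly the first three weighted norms in the definition of $\mathcal V$ (the norms on $(y,y_\Gamma)$, on $h\mathbbm{1}_\omega$, and on $(L(y)-\mathbbm{1}_\omega h, L_\Gamma(y,y_\Gamma))=(f,f_\Gamma)$, the last one being nothing but the hypothesis \eqref{assump:f:fGamma}).

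The main remaining step, and what I expect to be the principal technical obstacle, is to upgrade the above weighted $\mathbb L^2$-information to the $L^2(e^{s\hat\varphi/3}(0,T);\mathbb H^2)\cap L^\infty(e^{s\hat\varphi/3}(0,T);\mathbb H^1)$ regularity demanded by $\mathcal V$. For this I would set $\rho(t):=e^{s\hat\varphi(t)/3}$ (modified near $t=0$ so that the resulting weight vanishes as $t\to 0^+$, in order to accommodate merely $\mathbb L^2$-initial data) and analyse $(\tilde y,\tilde y_\Gamma):=(\rho y,\rho y_\Gamma)$, which solves a linear system of the form \eqref{linearized:problem:01} with source
\[
(\rho f + \rho\mathbbm{1}_\omega h + \rho' y,\ \rho f_\Gamma + \rho' y_\Gamma)
\]
and vanishing initial data. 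The factor $1/3$ is chosen so that $\rho$ is pointwise dominated by $e^{s\check\varphi}$ and $\rho'$ by $e^{s\hat\varphi}\check\xi^{-3/2}$ near $t=T$; the needed inequality $\hat\varphi<3\check\varphi$ follows from the explicit definitions of $\check\varphi,\hat\varphi$ provided $m$ and $\lambda$ are taken large enough. Consequently the modified source lies in $L^2(0,T;\mathbb L^2)$ with norm controlled by $\|(y,y_\Gamma,h)\|_{\mathcal V}$. Proposition \ref{prop:extra:reg:epsilon:T} then lifts the regularity of $(\tilde y,\tilde y_\Gamma)$ to $\mathbb H^1$ at positive times, and Proposition \ref{prop:estimate:H2} propagates this to $L^2(0,T;\mathbb H^2)\cap L^\infty(0,T;\mathbb H^1)$. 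Dividing back by $\rho$ yields the last two bounds required by $\mathcal V$ and completes the proof.
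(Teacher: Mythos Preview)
Your approach is essentially identical to the paper's: the same Fursikov--Imanuvilov variational construction on the completion of $P_0$ with respect to the bilinear form $a$, the same definitions of $(y,y_\Gamma,h)$ from the Lax--Milgram solution (the paper takes $h$ with a $+$ sign rather than $-$), and the same regularity bootstrap via the weighted variable $e^{s\hat\varphi/3}(y,y_\Gamma)$ together with the linear estimates of Section~\ref{section:Existence}. The only difference is that the paper applies Proposition~\ref{prop:estimate:H2} directly to $e^{s\hat\varphi/3}(y,y_\Gamma)$ without modifying the weight near $t=0$ (tacitly treating the initial data as $\mathbb H^1$, which is the only case needed in the application to Theorem~\ref{main:thm}), whereas you add the precaution of damping the weight at $t=0$ and invoking Proposition~\ref{prop:extra:reg:epsilon:T} first.
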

\begin{proof}
	For our purposes, we define 
	\begin{align*}
		P_0:=\left\{(z,z_\Gamma)\in C^\infty(\overline{\Omega}\times [0,T]) \times C^\infty(\Gamma\times [0,T])\,:\, z\big|_\Gamma =z_\Gamma\text{ on }\Gamma\times [0,T] \right\}.
	\end{align*}
	
	Let $\mathbbm{a}:P_0\times P_0 \to \mathbb{R}$ be the bilinear form 
	\begin{align*}
		\mathbbm{a}((z,z_\Gamma),(w,w_\Gamma)):=&\Re\int_0^T\int_\Omega e^{-2s\check{\varphi}} L^* z \overline{L^* (w)} dxdt+\Re\int_0^T\int_\Gamma e^{-2s\check{\varphi}} L_\Gamma^*(z,z_\Gamma)\overline{L_\Gamma^*(w,w_\Gamma)} dSdt\\
		&+\Re\int_0^T\int_\omega e^{-2s\check{\varphi}}\hat{\xi}^3 z\overline{w} dxdt,\quad \forall (z,z_\Gamma),(w,w_\Gamma)\in P_0\times P_0.
	\end{align*}
	
	We also define the linear form $\ell: P_0 \to \mathbb{R}$ as
	\begin{align*}
		\ell (w,w_\Gamma):=&\Re\int_\Omega y_0 \overline{w(0)} dx + \Re\int_\Gamma y_{\Gamma,0}\overline{w_\Gamma (0)} dS+\Re\int_0^T\int_\Omega f\overline{w} dxdt\\
		&+\Re\int_0^T\int_\Gamma f_\Gamma \overline{w_\Gamma} dSdt,\quad \forall (w,w_\Gamma)\in P_0.
	\end{align*}
	
	In view of the observability inequality \eqref{observability:inequality}, it is clear that 
	\begin{align*}
		\|(z,z_\Gamma)\|_{\mathbbm{a}}:=\sqrt{\mathbbm{a}((z,z_\Gamma),(z,z_\Gamma))},\quad \forall (z,z_\Gamma)\in P_0,
	\end{align*}
	defines a norm in $P_0$. Let $P$ be the completion of $(P_0,\|\cdot\|_\mathbbm{a})$. Then, $\mathbbm{a}(\cdot,\cdot)$ is well-defined, continuous and again definite positive on $P$. On the other hand, by  \eqref{assump:f:fGamma} and the
	observability inequality  \eqref{observability:inequality},   it is easy to see that
	\begin{align*}
		|\ell (w,w_\Gamma)| \leq C\|(w,w_\Gamma)\|_\mathbbm{a},\quad \forall (w,w_\Gamma)\in P_0,
	\end{align*} 
	i.e. $\ell$ is continuous in $P_0$ and,  by density,  in $P$. Thus, by Lax-Milgram Theorem,  the variational problem 
	\begin{align*}
		\begin{cases}
			\text{Find }(z,z_\Gamma)\in P\text{ such that }\\
			\mathbbm{a}((z,z_\Gamma),(w,w_\Gamma))=\ell(w,w_\Gamma),\quad \forall (w,w_\Gamma)\in P,
		\end{cases}
	\end{align*}
	possesses exactly one solution $(z^*,z^*_\Gamma)$. Let $(y^*,y^*_\Gamma,h^*)$ defined by
	\begin{align*}
		\begin{cases}
		y^*:=e^{-2s\check{\varphi}}L^*(z^*),&\text{ in }\Omega\times (0,T),\\
			y^*_\Gamma:= e^{-2s\check{\varphi}}L_\Gamma^*(z^*,z_\Gamma^*),&\text{ on }\Gamma\times (0,T),\\
			h^*:=e^{-2s\check{\varphi}}\hat{\xi}^3 z^*,&\text{ in }\omega\times (0,T).
		\end{cases}
	\end{align*}
	
	We point out that $(y^*,y^*_\Gamma,h^*)$ satisfies
	\begin{align*}
		&\int_0^T\int_\Omega e^{2s\check{\varphi}} |y^*|^2 dxdt+\int_0^T\int_\Gamma e^{2s\check{\varphi}} |y_\Gamma^*|^2 dSdt + \int_0^T\int_\omega e^{2s\check{\varphi}}\hat{\xi}^{-3} |h^*|^2 dxdt\\
		=&\mathbbm{a}((z^*,z_\Gamma^*),(z^*,z_\Gamma^*))<\infty,
	\end{align*}
	and also that 
	\begin{align}
		\label{distri:solution:y:star}
		\begin{split}
			&\Re\int_0^T\int_\Omega y^* \overline{L^*(w)}dxdt+\Re\int_0^T\int_\Gamma y_\Gamma^* \overline{L_\Gamma^*(w,w_\Gamma)} dSdt+\Re \int_0^T\int_\omega h^* \overline{w} dxdt\\
			=&\Re \int_\Omega y_0 \overline{w(0)} dx +\Re \int_\Gamma y_{\Gamma,0}\overline{w_\Gamma (0)} dS+\Re \int_0^T\int_\Omega f\overline{w}dxdt\\
			&+\Re\int_0^T\int_\Gamma f_\Gamma \overline{w_\Gamma} dSdt, 
		\end{split}
	\end{align}
	i.e., from \eqref{distri:solution:y:star} we deduce that $(y^*,y_\Gamma^*)\in C^0([0,T];\mathbb{L}^2)\cap L^2(0,T;\mathbb{H}^1)$  is a distributional solution with control $h^*$ and initial datum $(y_0,y_{\Gamma,0})$ such that $y^*(\cdot,T)=0$ in $\Omega$ and $y_\Gamma (\cdot,T)=0$ on $\Gamma$. 
	
	It remains to check that
	\begin{align}
	\label{regularity:ystar} 
	(y^*,y_\Gamma^*)\in L^2(e^{\frac{1}{3}s\hat{\varphi}}(0,T);\mathbb{H}^2) \cap L^\infty(e^{\frac{1}{3}s\hat{\varphi}}(0,T);\mathbb{H}^1).
	\end{align}
	In order to do that, we notice that the new variables 
	\begin{align*}
		(y^\star, y_\Gamma ^\star):=(e^{\frac{1}{3}s\hat{\varphi}} y^*,e^{\frac{1}{3}s\hat{\varphi}} y_\Gamma^*)
	\end{align*}
	solves the problem 
	\begin{align}
		\label{problem:ystar}
		\begin{cases}
			L(y^\star)=e^{\frac{1}{3}s\hat{\varphi}}(f+h\mathbbm{1}_\omega) + (e^{\frac{1}{3}s\hat{\varphi}})_t y^*,&\text{ in }\Omega\times (0,T),\\
		L_\Gamma(y^\star,y_\Gamma^\star)=e^{\frac{1}{3}s\hat{\varphi}} f_\Gamma +(e^{\frac{1}{3}s\hat{\varphi}})_t y_\Gamma^*,&\text{ in }\Gamma\times (0,T),\\
			y^\star =y_\Gamma^\star,&\text{ on }\Gamma \times (0,T),\\
			(y(0),y_\Gamma(0))=e^{\frac{1}{3}s\hat{\varphi}(0)}(y_0,y_{\Gamma,0}),&\text{ in }\Omega\times \Gamma. 
		\end{cases}
	\end{align}
	
	Since 
	\begin{align*}
		\left|\left(e^{\frac{1}{3}s\hat{\varphi}} \right)_t y^* \right|\leq C\hat{\xi}^2 e^{\frac{1}{3}s\hat{\varphi}} |y^*| \leq C e^{s\check{\varphi}}|y^\star|\in L^2(0,T;\mathbb{L}^2), 
	\end{align*}
	and
	\begin{align*}
		\left(e^{\frac{1}{3}s\hat{\varphi}}(f+h\mathbbm{1}_\omega),e^{\frac{1}{3}s\hat{\varphi}}f_\Gamma \right)\in L^2(0,T;\mathbb{L}^2), 
	\end{align*}
	and since $e^{\frac{1}{3}s\hat{\varphi}(0)}(y_0,y_{\Gamma,0})\in \mathbb{H}^1$, it follows from Proposition \ref{prop:estimate:H2} that the solution $(y^\star,y_\Gamma^\star)$ of \eqref{problem:ystar} satisfies 
	\begin{align*}
		(y^\star,y_\Gamma^\star)\in L^2(0,T;\mathbb{H}^2 \cap C^0([0,T];\mathbb{H}^1)),
	\end{align*}
	which is equivalently to \eqref{regularity:ystar}. We conclude that $(y^*,y_\Gamma^*,h^*)\in \mathcal{V}$. This ends the proof of the Proposition \ref{proposition:null:controllability}.  
\end{proof}
%

\section{Proof of the Theorem \ref{main:thm}}
\label{section:proof:main:result}
  
  To prove the main theorem of this article, we shall use a local inversion argument. This will be done by using the following local inversion mapping theorem in Banach spaces (see for instance \cite{Alekseev87}, page 107).
  
  \begin{theorem}
  	\label{thm:inversion}
  	Let $\mathcal{B}_1$ and $\mathcal{B}_2$ be two Banach spaces and let $\mathcal{A}:\mathcal{B}_1 \to \mathcal{B}_2$ be a $C^1(\mathcal{B}_1;\mathcal{B}_2)$ function. Suppose that $b_1\in \mathcal{B}_1$, $b_2=\mathcal{A}(b_1)$ and that $\mathcal{A}'(b_1): \mathcal{B}_1\to \mathcal{B}_2$ is surjective. Then, there exists $\delta>0$ such that, for every $b'\in \mathcal{B}_2$ satisfying $\|b'-b_2\|_{\mathcal{B}_2}<\delta$, there exists a solution of the equation
  	\begin{align*}
  	\mathcal{A}(b)=b',\quad b\in \mathcal{B}_1.
  	\end{align*}
  	
  	Moreover, there exists a constant $C>0$ such that 
  	\begin{align*}
  		\|b_1-b\|_{\mathcal{B}_1} \leq C \|b_2-b'\|_{\mathcal{B}_2}.
  	\end{align*}
  \end{theorem}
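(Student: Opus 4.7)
The plan is to establish this standard local surjection theorem (a version of the Lyusternik--Graves theorem) via a Newton-type iteration based on the open mapping theorem. Write $L := \mathcal{A}'(b_1)$ for the bounded linear surjection from $\mathcal{B}_1$ onto $\mathcal{B}_2$.

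First I would invoke the open mapping theorem for $L$: since $L$ is a bounded linear surjection between Banach spaces, it is open at the origin, so there exists $M > 0$ such that for every $y \in \mathcal{B}_2$ one can select some $x \in \mathcal{B}_1$ with $Lx = y$ and $\|x\|_{\mathcal{B}_1} \leq M\|y\|_{\mathcal{B}_2}$. No linear right inverse nor a topological complement of $\ker L$ is needed, only this bounded-norm preimage selection; this matters because a general Banach space may not admit such a splitting. Second, using the continuity of $b \mapsto \mathcal{A}'(b)$ at $b_1$, I would fix $\eta > 0$ such that $\|\mathcal{A}'(b) - L\|_{\mathcal{L}(\mathcal{B}_1, \mathcal{B}_2)} \leq \tfrac{1}{2M}$ whenever $\|b - b_1\|_{\mathcal{B}_1} \leq \eta$.

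Next I would build a Newton-like iteration. Set $b^{(0)} := b_1$ and, inductively, choose $\Delta^{(n)} \in \mathcal{B}_1$ with $L \Delta^{(n)} = b' - \mathcal{A}(b^{(n)})$ and $\|\Delta^{(n)}\|_{\mathcal{B}_1} \leq M\|b' - \mathcal{A}(b^{(n)})\|_{\mathcal{B}_2}$, then put $b^{(n+1)} := b^{(n)} + \Delta^{(n)}$. The key identity is
\[
b' - \mathcal{A}(b^{(n+1)}) \;=\; L\Delta^{(n)} - \bigl(\mathcal{A}(b^{(n+1)}) - \mathcal{A}(b^{(n)})\bigr) \;=\; -\int_0^1 \bigl(\mathcal{A}'(b^{(n)} + t\Delta^{(n)}) - L\bigr)\Delta^{(n)}\, dt,
\]
which, provided the iterates remain in the ball $B(b_1, \eta)$, produces the contraction $\|b' - \mathcal{A}(b^{(n+1)})\|_{\mathcal{B}_2} \leq \tfrac{1}{2}\|b' - \mathcal{A}(b^{(n)})\|_{\mathcal{B}_2}$ and hence $\|\Delta^{(n)}\|_{\mathcal{B}_1} \leq M\, 2^{-n}\|b' - b_2\|_{\mathcal{B}_2}$.

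To close the loop I would choose $\delta > 0$ small enough (for instance $\delta \leq \eta/(2M)$) so that a simple induction guarantees every $b^{(n)}$ stays in $B(b_1, \eta)$; then $(b^{(n)})$ is Cauchy, its limit $b^\star \in \mathcal{B}_1$ satisfies $\mathcal{A}(b^\star) = b'$ by continuity of $\mathcal{A}$, and a telescoping sum yields $\|b^\star - b_1\|_{\mathcal{B}_1} \leq \sum_{n \geq 0}\|\Delta^{(n)}\|_{\mathcal{B}_1} \leq 2M\|b' - b_2\|_{\mathcal{B}_2}$, so the constant of the statement can be taken as $C := 2M$. The main obstacle, and the reason a plain Banach fixed-point argument applied to a single map does not work directly, is coordinating the radius $\eta$ (governed by the modulus of continuity of $\mathcal{A}'$ near $b_1$) with the size of $\delta$ and $M$ so that every iterate remains in the ball where $L$ is a uniformly good approximation of $\mathcal{A}'$; the choice $\delta \leq \eta/(2M)$ combined with the geometric decay above is precisely what makes this self-consistent.
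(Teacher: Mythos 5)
The paper does not actually prove Theorem \ref{thm:inversion}: it is quoted as a known local surjection theorem (a form of the Lyusternik--Graves theorem) from \cite{Alekseev87}, so there is no in-paper argument to compare yours against. Your proposal is a correct, self-contained proof along the standard Newton-iteration route, and the two points that usually require care are both handled properly. First, you extract from the open mapping theorem only a bounded-norm preimage selection ($Lx=y$ with $\|x\|_{\mathcal{B}_1}\leq M\|y\|_{\mathcal{B}_2}$) rather than a continuous linear right inverse; this is the right level of generality, since $\ker L$ need not be complemented, and it is precisely the situation in Section \ref{section:proof:main:result}, where surjectivity of $\mathcal{A}'(0,0,0)$ comes from Proposition \ref{proposition:null:controllability} with no canonical splitting. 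Second, the induction keeping every iterate, and every segment $[b^{(n)},b^{(n+1)}]$ needed for the mean-value integral, inside the ball $B(b_1,\eta)$ where $\|\mathcal{A}'(\cdot)-L\|\leq\tfrac{1}{2M}$ is closed consistently by the choice $\delta\leq\eta/(2M)$ together with the geometric decay $\|\Delta^{(n)}\|_{\mathcal{B}_1}\leq M2^{-n}\|b'-b_2\|_{\mathcal{B}_2}$; the telescoping sum then yields the stated Lipschitz-type estimate with $C=2M$. I see no gap.
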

  
  Now, we have all the ingredients to prove the Theorem \ref{main:thm}.
  \begin{proof}[Proof of the Theorem \ref{main:thm}]
  	Consider the spaces
  	\begin{align*}
  	\mathcal{B}_1:=\mathcal{V},\quad \mathcal{B}_2:=L^2(e^{s\hat{\varphi}}\check{\xi}^{-3/2}(0,T);\mathbb{L}^2)\times \mathbb{L}^2,
  	\end{align*}
  	where $\mathcal{V}$ is the linear space defined in \eqref{def:V}.
  	 
  	Consider $\mathcal{A}:\mathcal{B}_1\to \mathcal{B}_2$ be the operator defined by
  	\begin{align*}
  	\mathcal{A}(u,u_\Gamma,h)=&\left(Lu+c(1+\gamma i)|u|^2u-\mathbbm{1}_\omega h,
  	L_\Gamma(u,u_\Gamma) + c(1+\gamma i)|u_\Gamma|^2 u_\Gamma, u(0),u_\Gamma(0) \right).
  	\end{align*}
  	We choose, $b_1=(0,0,0)$ and $b_2=\mathcal{A}(0,0,0)=(0,0,u_0,u_{\Gamma,0})$. In order to use the Theorem \ref{thm:inversion}, we shall check that the following two assertions:
  	\begin{itemize}
  		\item[(a)] $\mathcal{A}'(0,0,0):\mathcal{B}_1 \to \mathcal{B}_2$ is surjective.
  		\item[(b)] $\mathcal{A}$ is an operator of class $C^1$ from $\mathcal{B}_1$ to $\mathcal{B}_2$.
  	\end{itemize}
  	
  	A simple computation shows that for all $(u^*,u_\Gamma^*,h^*)\in \mathcal{B}_1$,
  	\begin{align}
  		\label{comp:A:prime}
  		\begin{split}
  		\mathcal{A}'(u,u_\Gamma,h)(u^*,u^*_\Gamma,h^*)=&( L(u^*)+3c(1+\gamma i)|u|^2 u^*-\mathbbm{1}_\omega h^*,L_\Gamma(u^*,u_\Gamma^*)\\
  		&
		+3c(1+\gamma i)|u_\Gamma|^2 u_\Gamma^*,u^*(0),u_\Gamma^*(0)).
  		\end{split}
  	\end{align}
  	
  	In particular, taking $(u,u_\Gamma,h)=(0,0,0)$ in \eqref{comp:A:prime}, we have
  	\begin{align*}
  	\begin{split}
  		\mathcal{A}'(0,0,0)(u^*,u_\Gamma^*,h^*)=&\left( L(u^*)-\mathbbm{1}_\omega h^*,N(u^*,u_\Gamma^*), u^*(0),u_\Gamma^*(0)\right).  
  	\end{split}
  	\end{align*}
  	
  	Now, it is clear that, in view of the Proposition \ref{proposition:null:controllability}, the operator $\mathcal{A}'(0,0,0)$ is surjective. 
  	
  	On the other hand, to prove that $\mathcal{A}\in C^1(\mathcal{B}_1,\mathcal{B}_2)$, it is sufficient to check that the map 
  	$$(u_1,u_{\Gamma,1},h_1),(u_2,u_{\Gamma,2},h_2),(u_3,u_{\Gamma,3},h_3)\mapsto (u_1u_2u_3,u_{\Gamma,1}u_{\Gamma,2}u_{\Gamma,3}),$$
  	is continuous from $\mathcal{V}\times \mathcal{V}\times \mathcal{V}$ to $L^2(e^{s\check{\varphi}}(0,T); \mathbb{L}^2)$. Then, according to H\"older inequality and using that $ L^2(0,T;\mathbb{H}^2) \cap L^\infty(0,T;\mathbb{H}^1) \hookrightarrow  L^6(0,T;\mathbb{L}^9) \hookrightarrow L^6(0,T;\mathbb{L}^6) $, we have 
  	\begin{align}
  	\label{key:step}
  	\begin{split}
  		&\|e^{s\hat{\varphi}} \check{\xi}^{-3/2}(u_1u_2u_3,u_{\Gamma,1}u_{\Gamma,2}u_{\Gamma,3})\|_{L^2(0,T;\mathbb{L}^2)}\\
  		\leq & C\prod_{j=1}^3 \|e^{\frac{1}{3}s\hat{\varphi}} (u_j,u_{\Gamma,j})\|_{L^6(0,T;\mathbb{L}^6)}\\
  		\leq & C\prod_{j=1}^3 \| e^{\frac{1}{3}s\hat{\varphi}} (u_j,u_{\Gamma,j}) \|_{L^2(0,T;\mathbb{H}^2)\cap L^\infty(0,T;\mathbb{H}^1)}\\
  		\leq & C\prod_{j=1}^3 \|(u_j,u_{\Gamma,j},h_j)\|_{\mathcal{V}}.
  		\end{split}		
  	\end{align}
  	
  	Therefore, we have that $\mathcal{A}\in C^1(\mathcal{B}_1,\mathcal{B}_2)$, and we can apply Theorem \ref{thm:inversion} to guarantee the existence of $\delta>0$ such that $\|(u_0,u_{\Gamma,0})\|_{\mathbb{H}^1}\leq \delta$, one can find $(y,y_\Gamma,h)\in \mathcal{B}_1:=\mathcal{V}$ such that the associated solution $(u,u_\Gamma)$ (with initial datum $(u_0,u_{\Gamma,0})$) satisfies 
  	\begin{align*}
  		y(\cdot,T)=0,\text{ in }\Omega,\quad y_\Gamma (\cdot,T)=0,\text{ on }\Gamma.
  	\end{align*}
  	
  	This, ends the proof of the Theorem \ref{main:thm}.
  \end{proof}

	\section*{Acknowledgments}
	
	Nicol\'as Carre\~no has been funded by ANID FONDECYT 1211292. Alberto Mercado has been funded by ANID FONDECYT 1211292 and ANID Millennium Science Initiative Program, Code NCN19-161. Roberto Morales has been funded by ANID FONDECYT 3200830.
	     
  
  \bibliographystyle{plain}

\end{document}